\newtheorem{thm}{Theorem}
\newtheorem{cor}[thm]{Corollary}
\newtheorem{lem}[thm]{Lemma}
\theoremstyle{definition}
\newtheorem{remark}{Remark}
\newcommand{\R}{\mathbb{R}}
\newcommand{\Z}{\mathbb{Z}}
\newcommand{\N}{\mathbb{N}}
\newcommand{\innerp}[1]{\langle {#1} \rangle}
\newcommand{\biginnerp}[1]{\left\langle {#1} \right\rangle}
\newcommand{\sinc }{{\rm sinc}}
\newcommand{\abs}[1]{\lvert#1\rvert}
\newcommand{\bigabs}[1]{\big\lvert#1\big\rvert}
\newcommand{\Bigabs}[1]{\Big\lvert#1\Big\rvert}
\newcommand{\Dev}[3]{\frac{d^{#2}}{d#1^{#2}}#3}
\date{}
\begin{document}
\bibliographystyle{plain}
\title[On  B-spline framelets derived from UEP]{On  B-spline framelets derived from the unitary extension principle}

\author{Zuowei Shen}
\thanks{ Zuowei Shen was supported by R146-000-113-112 from the National University of Singapore.
Zhiqiang Xu was supported  by NSFC grant 11171336 and by the Funds for Creative
Research Groups of China (Grant No. 11021101). }
\address{Department of Mathematics, National University of Singapore, Block S17, 10
Lower Kent Ridge Road, Singapore, 119076}
\email{matzuows@nus.edu.sg}

\author{Zhiqiang Xu}

\address{LSEC, Inst.~Comp.~Math., Academy of Mathematics and System Sciences,  Chinese Academy of Sciences, Beijing, 100091, China}
\email{xuzq@lsec.cc.ac.cn}

\maketitle

\begin{abstract}

Spline  wavelet tight frames of \cite{RonShen97}  have been used widely in frame based image analysis and restorations  (see, e.g. survey articles  \cite{DongShen10, Shen10}). However, except for the tight  frame property and the approximation order of the truncated series  (see \cite{RonShen97, DaubechiesHanRonShen}), there are few other properties of this family of  spline  wavelet tight frames to be known.  This  paper is to  present  a few  new properties of this family  that  will provide further understanding of it and, hopefully, give some indications why it is efficient in image analysis and restorations. In particular, we present a recurrence formula of computing  generators of higher order spline wavelet tight  frames from the lower order ones.  We also represent each generator of spline wavelet tight frames as certain order of derivative of some univariate box spline. With this,  we further show that  each  generator of sufficiently high order spline wavelet tight frames  is close to  a right order of derivative of a properly scaled Gaussian  function.   This leads to  the result that the wavelet system generated by a finitely many consecutive derivatives   of a  properly scaled  Gaussian function forms a frame whose frame bounds can be  almost tight.
\end{abstract}

\section{Introduction}

 This paper is to  investigate  the family of the  spline wavelet  tight frames derived from  \cite{RonShen97}.
 We start with basic notions.
For given $\Psi:=\{\psi_1,\ldots,\psi_r\}\subset L_2(\R)$, the {\em wavelet system}  generated by $\Psi$ is defined as
$$
X(\Psi):=\{\psi_{\ell,n,k}:=2^{n/2}\psi_\ell(2^n\cdot-k): 1\leq \ell \leq r;\,\, n,k\in \Z\}.
$$
The system $X(\Psi)\subset L_2(\R)$ is called a {\em  tight frame} if
 $$
 f=\sum_{g\in X(\Psi)}\innerp{f,g}g
 $$
 holds for all $f\in L_2(\R)$.
 If $X(\Psi)\subset L_2(\R)$ is a  tight frame system of $L_2(\R)$ generated by a Multiresolution analysis (MRA), then  its  generators $\Psi$ are called as a {\em framelet}.

 The MRA starts from a refinable function $\varphi$. A
compactly supported function $\varphi$ is refinable if it satisfies a
refinement equation
\begin{equation} \label{refinable}
\varphi(x) = 2 \sum_{j \in \Z} a_j \varphi(2 x-j),
\end{equation}
for some sequence $a\in\ell_2(\Z)$. Refinable equation \eqref{refinable} can be written via its Fourier transform
as
\[\widehat{\varphi}(\omega)=\widehat a(\frac{\omega}{2})\cdot \widehat{\varphi}(\frac{\omega}{2}),\quad\text{a.e.}\quad\omega\in\R.\]
We call the sequence $a$ the refinement mask of $\varphi$ and
$\widehat{a}(\omega)$ the refinement symbol  of $\varphi$. Here,
 we use $\widehat{f}$ to denote the Fourier transform of $f\in L_1(\R)$, which is defined as
$$
\widehat{f}(\omega):=\int_{-\infty}^\infty f(x)\exp(-i\omega x)dx.
$$

 For a  refinable function $\varphi\in L_2(\R),$
let $V_0$ be the closed shift invariant space generated by
$\{\varphi(\cdot-k): k\in\Z\}$ and $V_j:=\{f(2^j\cdot): f\in V_0$\},
$j\in\Z$. It is  known that when $\varphi$ is compactly supported,
then $\{V_j\}_{j\in\Z}$ forms a multiresolution analysis. Recall
that a multiresolution analysis is a family of closed subspaces
$\{V_j\}_{j\in\Z}$ of $L_2(\R)$ that satisfies: (i) $V_j \subset
V_{j+1},$ (ii) $\bigcup_j V_j$ is dense in $L_2(\R)$ and (iii)
$\bigcap_j V_j=\{0\}$ (see \cite{carl:ca:93} and \cite{JS}).

A special family of refinable functions is B-splines.
 Let $\varphi^{(m)}$ be the centered B-spline of order $m$, which is defined in Fourier domain by
\begin{equation}\label{bspline}
\hat{\varphi}^{(m)}(\omega)=e^{-\frac{i\omega j_m}{2}}\sinc(\frac{\omega}{2})^m,
\end{equation}
where
\begin{equation}\label{defjm}
\sinc(x):=\begin{cases}
\sin(x)/x, & \hbox{for $x\neq 0$}\\
1,  & \hbox{for $x=0$}
\end{cases};
\quad\hbox{and}\quad
 j_m:=\begin{cases}
0, & \hbox{ $m$ is even}\\
1,  & \hbox{ $m$ is odd}
\end{cases} .
\end{equation}
Then $\varphi^{(m)}$ is refinable with refinement symbol
$$
\hat{a}^{(m)}(\omega)=e^{-\frac{i\omega j_m}{2}}\cos^m(\frac{\omega}{2}).
$$

  The tight framelets can be constructed by the unitary extension principle (UEP) of \cite{RonShen97} from a given  multiresolution
analysis. For a given B-spline $\varphi^{(m)}$  of order $m$, it was
 shown in \cite{RonShen97} that the $m$ functions, $\Psi^{(m)}=\{{\psi}_\ell^{(m)}: \ell=1,\ldots,m\}$, defined in Fourier domain by
\begin{equation}\label{eq:pside}
\hat{\psi}_\ell^{(m)}(\omega):=i^\ell e^{-\frac{i\omega j_m}{2}}\sqrt{{m\choose \ell}}\frac{\cos^{m-\ell}(\omega/4)\sin^{m+\ell}(\omega/4)}{(\omega/4)^m},
\end{equation}
 form a tight wavelet frame in $L_2(\R)$, i.e. $\Psi^{(m)}$ is a framelet set.
  We call $\Psi^{(m)}$ as the {\em B-spline framelet of order $m$}.
  The B-spline framelet is  either symmetric or anti-symmetric and has small supports for a given smoothness order.
 Similar with B-splines, each B-spline framelet  has an  analytic form.

Since the publication of the unitary extension principle (UEP)  of \cite{RonShen97} in 1997,   there are many theoretic developments  and applications of MRA based wavelet frames. In particular, the B-spline framelets $\Psi^{(m)}$ derived from the UEP in \cite{RonShen97} are widely used in various applications, which include image inpainting in \cite{CCS:ACHA:08};  image denoising in \cite{COS:XX:09}; high and super resolution  image reconstruction in \cite{CRSS:ACHA:04};  deblurring and blind debluring in \cite{COS:XX:08, COS:XX:09, CJLS, CS:NM:07}; and image segmentation in \cite{DCS}. The interested reader should consult the survey articles \cite{DongShen10,Shen10} for details.

The paper is organized as follows:  Section 2, which contains two subsections,  is for some basic properties of B-spline framelets.
In particular, in sub-section 2.1, we present recurrence formulas for B-spline framelets $\psi^{(m)}_\ell$, in which the well-known
recurrence formula of B-splines can be
viewed as a special form of recurrence formulas of B-spline framelets.
 This gives a fast algorithm for computing them.    We   further show  that the
 B-spline framelets can be derived from the $\ell$th derivative of
  some univariate  box spline in sub-section 2.2, This was implicity used in \cite{CDOS11}, to
  approximate the some derivatives of a function.
In Section 3, we investigate the asymptotic property of B-spline framlets $\psi_\ell^{(m)}$, $\ell=1,\ldots,m$. We firstly prove that the
univariate box splines  defined in Section 2 uniformly converge to a Gaussian function under a mild condition, and we further  show that
$$
\max_{1\leq\ell \leq m}\max_{x\in \R}\abs{\psi_{\ell}^{(m)}(x)-G_\ell^{(m)}(x)}\lesssim \frac{(\ln m)^{5/2}}{m^{3/2}},
$$
where $G_\ell^{(m)}$ is the $\ell$th derivative of some scaled Gaussian function $G(x)$ (see Section 3.2 for the detailed definition).

This leads to discover that wavelet system generated by a finite number of consecutive directives of scaled Gaussian function form a frame whose bounds are almost tight, and that is done in Section 4.

\section{Properties of B-spline framelets }
In this section, we give a  recurrence  formula of the B-spline framelets  which computes
higher order  framelets from lower order ones. We also show that one can
 represent the derivatives of higher order
framelets by lower order framelets.  Furthermore, we
derive  another  set of formulas that represents each framelet as a derivative of a univariate box spline which is already implicitly used
in \cite{CDOS11}, where a theory is developed to connect the PDE based  and spline wavelet based image restorations.

\subsection{Recurrence  formulas of  B-spline framelets}

 While the recurrence formulas for B-splines and their  derivatives are well-known (see \cite{deboor01}), the corresponding formulas for
  B-spline framelets are not available yet. This section is to establish such formulas.
 Let $B_m:=\varphi^{(m)}(\cdot+j_m/2)$, where $\varphi^{(m)}$ is given in \eqref{bspline} and $j_m$ is defined \eqref{defjm}.
 Recall the following  well-known recurrence formula of B-splines:
\begin{equation}\label{eq:Bsplinere}
B_{m+1}(x)= \frac{2x+m+1}{2m}B_m\left(x+\frac{1}{2}\right)+\frac{m+1-2x}{2m}B_m\left(x-\frac{1}{2}\right).
\end{equation}
Based on (\ref{eq:Bsplinere}), one can  compute B-splines fast and easily which  makes
B-splines useful.  The derivative of  B-splines can be computed by the lower order
splines as given below:
\begin{equation}\label{eq:Bsplinedev}
\Dev{x}{}{B_{m+1}{(x)}}=B_{m}\left(x+\frac{1}{2}\right)-B_{m}\left(x-\frac{1}{2}\right).
\end{equation}
The aim of this section  is to give  the corresponding formulas  for the B-spline framelets $\psi_\ell^{(m)},\ell=1,\ldots,m$.
To state the formulas conveniently, we present the formulas for the  function $\tilde{\psi}_\ell^{(m)}(\cdot):={\psi}_\ell^{(m)}(\cdot+\frac{j_m}{2})$.  Note that the Fourier transform of $\tilde{\psi}_\ell^{(m)}$  is
\begin{equation}\label{eq:de}
\widehat{\tilde{\psi}}_\ell^{(m)}(\omega)= i^\ell \sqrt{{m\choose \ell}}\frac{\cos^{m-\ell}(\frac{\omega}{4})\sin^{m+\ell}(\frac{\omega}{4})}{(\frac{\omega}{4})^m}.
\end{equation}
We note that the formulas presented in this subsection are used to calculate the function value and the derivative of $\tilde{\psi}_\ell^{(m)}$. When $m$ is even, ${\psi}_\ell^{(m)}\equiv \tilde{\psi}_\ell^{(m)}$. When $m$ is odd, one  can obtain those of the function ${\psi}_\ell^{(m)}$ by the half-translation of $\tilde{\psi}_\ell^{(m)}$.  Hence, the formulas given in this subsection also  work for ${\psi}_\ell^{(m)}$ with a proper shift.

Next, we present the recurrence relations of  framelets $\tilde{\psi}_\ell^{(m)}$.

\begin{thm}
Let $m\in \N$  be given and $1\leq \ell \leq m$ and let the  framelet $\tilde{\psi}_\ell^{(m)}$ derived from B-spline of order $m$
 be given via its  Fourier transform as (\ref{eq:de}).
 Then,   we have the following  recurrence  formula  between
  $\tilde{\psi}_{\ell}^{(m+1)}$ and  $\tilde{\psi}_{\ell}^{(m)}$:  for $1\leq \ell\leq m$:
\begin{eqnarray}\label{eq:re2}
 & &\tilde{\psi}_\ell^{(m+1)}(x)=\sqrt{\frac{m+1}{m+1-\ell}}\\
 & &\left( \frac{2x+m+1}{2m}\tilde{\psi}_{\ell}^{(m)}\left(x+\frac{1}{2}\right)+\break \frac{m+1-2x}{2m}\tilde{\psi}_{\ell}^{(m)}\left(x-\frac{1}{2}\right)
+\frac{\ell}{m}\tilde{\psi}_{\ell}^{(m)}(x)\right);\nonumber
\end{eqnarray}
 the recurrence formula   between  $\tilde{\psi}_{m+1}^{(m+1)}$ and  $\tilde{\psi}_{m}^{(m)}$ is:
\begin{equation}\label{eq:re1}
\tilde{\psi}_{m+1}^{(m+1)}(x)=\frac{2x+m+1}{2m}\tilde{\psi}_m^{(m)}\left(x+\frac{1}{2}\right)+
\frac{2x-m-1}{2m}\tilde{\psi}_m^{(m)}\left(x-\frac{1}{2}\right)
-\frac{2x}{m}\tilde{\psi}_m^{(m)}(x).
\end{equation}
\end{thm}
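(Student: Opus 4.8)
The plan is to work entirely on the Fourier side, where the framelets have the closed product form \eqref{eq:de}, and to translate each displayed recurrence into an elementary trigonometric identity that one can verify directly. Writing $c:=\cos(\omega/4)$ and $s:=\sin(\omega/4)$, equation \eqref{eq:de} reads $\widehat{\tilde\psi}_\ell^{(m)}(\omega)=i^\ell\sqrt{\binom m\ell}\,c^{\,m-\ell}s^{\,m+\ell}/(\omega/4)^m$. The first step is to record the Fourier-side dictionary: multiplication by $x$ corresponds to $i\,\frac{d}{d\omega}$ acting on the transform, and the half-translations $\tilde\psi_\ell^{(m)}(x\pm\tfrac12)$ correspond to multiplication by $e^{\pm i\omega/2}=c^2-s^2\pm 2ics$ (using the double-angle formulas at argument $\omega/4$). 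With this dictionary, each of \eqref{eq:re2} and \eqref{eq:re1} becomes the assertion that a certain linear combination of $\widehat{\tilde\psi}_\ell^{(m)}$, its derivative, and its translates equals $\widehat{\tilde\psi}_\ell^{(m+1)}$.

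For \eqref{eq:re2}, I would first compute $\frac{d}{d\omega}\widehat{\tilde\psi}_\ell^{(m)}(\omega)$ by logarithmic differentiation of the product $c^{\,m-\ell}s^{\,m+\ell}(\omega/4)^{-m}$: the derivative picks up the factor $\tfrac14\big((m+\ell)\cot(\omega/4)-(m-\ell)\tan(\omega/4)-4m/\omega\big)$. Then I would assemble the right-hand side of \eqref{eq:re2}, using $\frac{2x+m+1}{2m}\tilde\psi_\ell^{(m)}(x+\tfrac12)+\frac{m+1-2x}{2m}\tilde\psi_\ell^{(m)}(x-\tfrac12)$ $=$ (on the Fourier side) $\frac{m+1}{2m}(e^{i\omega/2}+e^{-i\omega/2})\widehat{\tilde\psi}_\ell^{(m)}$ plus a $\frac1m$-multiple of $\frac{d}{d\omega}\big[(e^{i\omega/2}-e^{-i\omega/2})\widehat{\tilde\psi}_\ell^{(m)}\big]$ coming from the $2x$ term; the scalar $\frac\ell m\tilde\psi_\ell^{(m)}(x)$ contributes $\frac\ell m\widehat{\tilde\psi}_\ell^{(m)}$. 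After substituting $e^{\pm i\omega/2}=\cos(\omega/2)\pm i\sin(\omega/2)$ with $\cos(\omega/2)=c^2-s^2$, $\sin(\omega/2)=2cs$, the transcendental factors $\cot,\tan,1/\omega$ produced by the derivative must cancel against those in $\widehat{\tilde\psi}_\ell^{(m+1)}/\widehat{\tilde\psi}_\ell^{(m)}=\sqrt{\tfrac{m+1}{m+1-\ell}}\cdot c\,s\cdot(\tfrac{\omega/4}{?})$ — more precisely the clean target ratio is $\sqrt{\binom{m+1}{\ell}/\binom m\ell}\cdot cs/(\omega/4)=\sqrt{\tfrac{m+1}{m+1-\ell}}\cdot cs/(\omega/4)$. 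Verifying this cancellation — checking that the coefficient of $1/\omega$ vanishes and that the remaining polynomial-in-$(c,s)$ identity holds — is the computational heart of the argument. Equation \eqref{eq:Bsplinere} for B-splines is exactly the $\ell=0$ shadow of this computation (with the $\frac\ell m$ term absent), which is a useful sanity check.

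For \eqref{eq:re1}, the point is that $\tilde\psi_{m+1}^{(m+1)}$ has no cosine factor: its transform is $i^{m+1}c^{0}s^{2m+2}/(\omega/4)^{m+1}$, while $\tilde\psi_m^{(m)}$ has transform $i^m c^0 s^{2m}/(\omega/4)^m$ up to the binomial constants $\sqrt{\binom{m+1}{m+1}}=\sqrt{\binom mm}=1$. So the target ratio is $i\,s^2/(\omega/4)=i\cdot 4s^2/\omega$, and again I would express the right-hand side of \eqref{eq:re1} on the Fourier side — the combination $\frac{2x+m+1}{2m}(\cdot)(x+\tfrac12)+\frac{2x-m-1}{2m}(\cdot)(x-\tfrac12)-\frac{2x}m(\cdot)(x)$ produces, via the $i\frac d{d\omega}$ rule for the $x$-terms and $e^{\pm i\omega/2}$ for the shifts, a first-order differential expression in $\widehat{\tilde\psi}_m^{(m)}$ — and check it reduces to $i\cdot 4s^2/\omega$ times $\widehat{\tilde\psi}_m^{(m)}$. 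The main obstacle throughout is purely bookkeeping: keeping the branch of the square-root constants, the powers of $i$, and the $\omega/4$ versus $\omega/2$ arguments consistent so that the ostensibly transcendental terms ($\cot$, $1/\omega$) really do cancel; there is no conceptual difficulty once the Fourier dictionary is in place, and one may, if preferred, verify the final polynomial identities in $(c,s)$ by matching Taylor coefficients at $\omega=0$ together with an analyticity/degree argument rather than by hand expansion.
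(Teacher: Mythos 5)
Your proposal is correct and takes essentially the same route as the paper's proof: both pass to the Fourier domain, handle the multiplication by $x$ via the correspondence with $i\,\frac{d}{d\omega}$ (the paper computes the transform of $g_\ell(x)=x\tilde\psi_\ell^{(m)}(x)$ explicitly from the closed form \eqref{eq:de}) and the half-shifts via $e^{\pm i\omega/2}$, and then reduce each recurrence to a directly checkable trigonometric identity. The cancellation you flag as the computational heart is exactly the "simple manipulation" the paper performs.
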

\begin{proof}
We firstly prove (\ref{eq:re2}) which is done in Fourier domain.
Note that
$$
\Dev{\omega}{}{\widehat{\tilde{\psi}}_{\ell}^{(m)}(\omega)}=-i\int_{-\infty}^\infty x\tilde{\psi}_\ell^{(m)}(x)e^{-i\omega x}dx,
$$
which implies that the Fourier transform of function $g_\ell(x):=x\tilde{\psi}_{\ell}^{(m)}(x)$ is
\begin{equation}\label{eq:fourf}
\hat{g}_\ell(\omega)= i^{\ell+1}\cdot 4^{m-1}\cdot \sqrt{m\choose \ell}  \cos(\frac{\omega}{4})^{m-\ell-1}\sin(\frac{\omega}{4})^{m+\ell-1} \frac{m\omega\cos(\frac{\omega}{2})-2m\sin(\frac{\omega}{2})+\ell\cdot\omega }{\omega^{m+1}}.
\end{equation}
Note that
$$
\frac{2x+m+1}{2m}\tilde{\psi}_\ell^{(m)}(x+\frac{1}{2})=\frac{1}{m}g_\ell(x+\frac{1}{2})+\frac{1}{2}\tilde{\psi}_\ell^{(m)}(x+\frac{1}{2})
$$
and
$$
\frac{m+1-2x}{2m}\tilde{\psi}_\ell^{(m)}(x-\frac{1}{2})=\frac{1}{2}\tilde{\psi}_\ell^{(m)}(x-\frac{1}{2})-\frac{1}{m}g_\ell(x-\frac{1}{2}).
$$
A simple manipulation shows that the Fourier transform of the right hand side of
(\ref{eq:re2}) becomes
\begin{eqnarray*}
& &\sqrt{\frac{m+1}{m+1-\ell}}(\frac{1}{m}\exp(\frac{i\omega}{2})\hat{g}_\ell(\omega)+\frac{1}{2}\cdot (\exp(\frac{i\omega}{2})+\exp(-\frac{i\omega}{2}))
\widehat{\tilde{\psi}}_\ell^{(m)}(\omega)\\
& &-\frac{1}{m} \exp(-\frac{i\omega}{2})\hat{g}_\ell(\omega)+\frac{\ell}{m}\widehat{\tilde{\psi}}_\ell^{(m)}(\omega))\\
&=&i^\ell \sqrt{{m+1\choose \ell}}\frac{\cos^{m+1-\ell}(\frac{\omega}{4})\sin^{m+1+\ell}(\frac{\omega}{4})}{(\frac{\omega}{4})^{m+1}}=\widehat{\tilde{\psi}}_\ell^{(m+1)}(\omega).
\end{eqnarray*}
This proves (\ref{eq:re2}). Similarly, the Fourier transform of the right side of (\ref{eq:re1}) is
\begin{eqnarray*}
& &\frac{1}{m}\left(\exp{(\frac{i\omega}{2})}(\hat{g}_m(\omega)+\frac{m}{2}\widehat{\tilde{\psi}}_m^{(m)}(\omega))+
\exp{(-\frac{i\omega}{2})}(\hat{g}_m(\omega)-\frac{m}{2}\widehat{\tilde{\psi}}_m^{(m)}(\omega))-2\hat{g}_m(\omega)
\right)\\
&=&i^{m+1}\frac{\sin^{2m+2}(\frac{\omega}{4})}{(\frac{\omega}{4})^{m+1}}=\widehat{\tilde{\psi}}_{m+1}^{(m+1)}(\omega),
\end{eqnarray*}
which proves (\ref{eq:re1}).
\end{proof}

 Furthermore,  combining  (\ref{eq:re2}) and (\ref{eq:re1}), we have a recurrence algorithm for efficiently computing $\tilde{\psi}_\ell^{(m)}, \ell=1,\ldots,m$. When $\ell <m$, we can use (\ref{eq:re2}) to compute  $\tilde{\psi}_\ell^{(m)}$ by $\tilde{\psi}_\ell^{(m-1)}$; we can use (\ref{eq:re1}) to compute $\tilde{\psi}_\ell^{(\ell)}$ by $\tilde{\psi}_{\ell-1}^{(\ell-1)}$. Hence, we finally can reduce the computation of $\tilde{\psi}_\ell^{(m)}$ to  that of  $\tilde{\psi}_1^{(1)}$.
Note that the function $\tilde{\psi}_1^{(1)}$ is Haar wavelet and
$$
\tilde{\psi}_{1}^{(1)}(x)=\begin{cases}
1, & \hbox{ if $x\in [-1/2,0)$ },\\
-1,  & \hbox{ if $x\in [0,1/2]$ },\\
0 ,& \hbox{ if $|x|>1/2$}.
\end{cases}
$$

We next show the method for computing $\tilde{\psi}_2^{(4)}$ by a table. In the following table, for the notation $\rightarrow$, we use
the formula (\ref{eq:re2}), while for the notation $ \searrow$, we use (\ref{eq:re1}):
\begin{center}
\begin{tabular}{p{10pt}cccccc}
$B_1$ &   & $B_2$ &  & $B_3$ & & $B_4$   \\
$\tilde{\psi}_1^{(1)}$&  & $\tilde{\psi}_1^{(2)}$& & $\tilde{\psi}_1^{(3)}$& & $\tilde{\psi}_1^{(4)}$ \\
                    & ${\searrow}$ & $\tilde{\psi}_2^{(2)}$ &$\rightarrow$ &$\tilde{\psi}_2^{(3)}$ & $\rightarrow$ &$\tilde{\psi}_2^{(4)}$ \\
            &  &              & &$\tilde{\psi}_3^{(3)}$ & &$\tilde{\psi}_3^{(4)}$ \\
            &  &              & &             & &$\tilde{\psi}_4^{(4)}$ \\
\end{tabular}.
\end{center}

Using the method, we compute $B_5$ and corresponding framelets (See Figure 1).
\begin{figure}[!ht]
\begin{center}
\epsfxsize=12cm\epsfbox{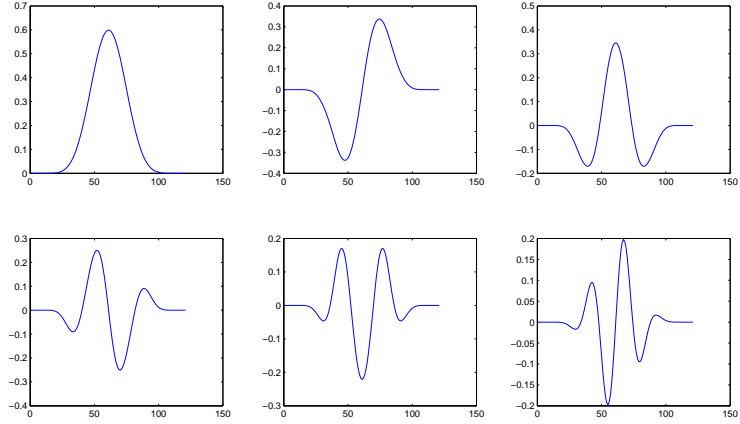}
\end{center}
\caption{ The $B_5$ and corresponding framelets. }
\end{figure}

Next, we give the recurrence formula for the derivatives  of $\tilde{\psi}_\ell^{(m)}$:
\begin{thm}
Let $m\in \N$ be given and $1\leq \ell \leq m$,  and let the framelet $\tilde{\psi}_{\ell}^{(m)}$  derived from B-spline of order $m$ be defined
  by its Fourier transform as (\ref{eq:de}).
When $1\leq \ell\leq m-1$,  we have
\begin{equation}\label{eq:de1}
\Dev{x}{}{\tilde{\psi}_\ell^{(m)}(x)}=\sqrt{\frac{m}{m-\ell}} \left(\tilde{\psi}^{(m-1)}_\ell\left(x+\frac{1}{2}\right)-\tilde{\psi}^{(m-1)}_\ell\left(x-\frac{1}{2}\right)\right).
\end{equation}
When $\ell=m$, we have
\begin{equation}\label{eq:de2}
\Dev{x}{}{\tilde{\psi}_m^{(m)}(x)}= \tilde{\psi}_{m-1}^{(m-1)}\left(x+\frac{1}{2}\right)+\tilde{\psi}_{m-1}^{(m-1)}\left(x-\frac{1}{2}\right)-2\tilde{\psi}_{m-1}^{(m-1)}(x).
\end{equation}
\end{thm}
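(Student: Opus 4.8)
The plan is to verify both formulas on the Fourier side, exactly as in the proof of the previous theorem, using that (with the convention $\widehat f(\omega)=\int_{-\infty}^{\infty}f(x)e^{-i\omega x}\,dx$) differentiation corresponds to multiplication by $i\omega$ and the shift $f(\cdot+a)$ corresponds to multiplication by $e^{ia\omega}$. Applying these to \eqref{eq:de1}, the left-hand side transforms into $i\omega\,\widehat{\tilde{\psi}}_\ell^{(m)}(\omega)$ and the right-hand side into $\sqrt{m/(m-\ell)}\,(e^{i\omega/2}-e^{-i\omega/2})\,\widehat{\tilde{\psi}}_\ell^{(m-1)}(\omega)$, so it suffices to check
\[
i\omega\,\widehat{\tilde{\psi}}_\ell^{(m)}(\omega)=\sqrt{\tfrac{m}{m-\ell}}\,\bigl(e^{i\omega/2}-e^{-i\omega/2}\bigr)\,\widehat{\tilde{\psi}}_\ell^{(m-1)}(\omega),\qquad 1\le\ell\le m-1,
\]
and, for \eqref{eq:de2} with $\ell=m$,
\[
i\omega\,\widehat{\tilde{\psi}}_m^{(m)}(\omega)=\bigl(e^{i\omega/2}+e^{-i\omega/2}-2\bigr)\,\widehat{\tilde{\psi}}_{m-1}^{(m-1)}(\omega).
\]

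To prove the first identity I would substitute \eqref{eq:de} on both sides, write $e^{i\omega/2}-e^{-i\omega/2}=2i\sin(\omega/2)$, and use the half-angle identity $\sin(\omega/2)=2\sin(\omega/4)\cos(\omega/4)$; this turns the cosine and sine exponents $m-1-\ell$ and $m-1+\ell$ coming from $\widehat{\tilde{\psi}}_\ell^{(m-1)}$ into $m-\ell$ and $m+\ell$, matching the numerator of $\widehat{\tilde{\psi}}_\ell^{(m)}$. On the left one absorbs the extra $\omega$ through $\omega/(\omega/4)^{m}=4/(\omega/4)^{m-1}$, so that both sides carry the denominator $(\omega/4)^{m-1}$. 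The remaining comparison of constants reduces, once the powers of $i$ are matched (each side has exactly one factor of $i$ beyond $i^\ell$) and the factors of $4$ cancelled, to $\sqrt{\binom{m}{\ell}}=\sqrt{m/(m-\ell)}\,\sqrt{\binom{m-1}{\ell}}$, which is the elementary identity $\binom{m}{\ell}=\frac{m}{m-\ell}\binom{m-1}{\ell}$; note that $\ell\le m-1$ is exactly what keeps the exponent $m-1-\ell$ nonnegative. The second identity is handled the same way with $\ell=m$: now $e^{i\omega/2}+e^{-i\omega/2}-2=2\cos(\omega/2)-2=-4\sin^2(\omega/4)$ raises the sine exponent from $2m-2$ to $2m$, there is no cosine factor to track since $\widehat{\tilde{\psi}}_m^{(m)}(\omega)=i^m\sin^{2m}(\omega/4)/(\omega/4)^m$, and after pulling out the extra $\omega$ the identity collapses to $4\,i^{m+1}=-4\,i^{m-1}$, which holds because $i^2=-1$.

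There is no genuine obstacle here: the whole argument is a short computation with trigonometric identities and with the exponents of $\cos(\omega/4)$, $\sin(\omega/4)$ and $\omega/4$, so the only care needed is the bookkeeping of these exponents and of the scalar constants (the powers of $i$ and the binomial ratio). The one point worth spelling out is that the scalar identities are first established for $\omega\ne0$ and then hold for all $\omega\in\R$ because both sides are restrictions of entire functions of $\omega$ (the apparent singularity at the origin is removable, the numerators vanishing to sufficiently high order); passing back from the Fourier side to physical space is then justified by the injectivity of the Fourier transform, both sides being compactly supported piecewise-polynomial functions in $L_2(\R)$ (here one uses $m\ge2$, so that $\tilde{\psi}_\ell^{(m)}$ is absolutely continuous and $\widehat{(\tilde{\psi}_\ell^{(m)})'}(\omega)=i\omega\,\widehat{\tilde{\psi}}_\ell^{(m)}(\omega)$ is legitimate).
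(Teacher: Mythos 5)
Your proposal is correct and follows essentially the same route as the paper: both verify \eqref{eq:de1} and \eqref{eq:de2} by computing the Fourier transforms of the two sides, using $e^{i\omega/2}-e^{-i\omega/2}=4i\sin(\omega/4)\cos(\omega/4)$ (resp.\ $e^{i\omega/2}+e^{-i\omega/2}-2=-4\sin^2(\omega/4)$) together with the identity $\binom{m}{\ell}=\frac{m}{m-\ell}\binom{m-1}{\ell}$. Your closing remarks on the removable singularity at $\omega=0$ and the injectivity of the Fourier transform are standard points the paper leaves implicit; they do not change the argument.
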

\begin{proof}
We prove  (\ref{eq:de1}) here  while (\ref{eq:de2}) can be proven similarly.
A simple calculation shows that  the Fourier transform of the right side of (\ref{eq:de1}) is
\begin{eqnarray}\label{eq:proofde}
& &\sqrt{\frac{m}{m-\ell}}\cdot \sqrt{{m-1\choose \ell}}\cdot i^\ell \cdot \frac{\cos^{m-1-\ell}(\frac{\omega}{4})\sin^{m-1+\ell}(\frac{\omega}{4})}{(\frac{\omega}{4})^{m-1}}(e^{i\frac{\omega}{2}}-e^{-i\frac{\omega}{2}})\nonumber\\
& &= 4i^{\ell+1} \sqrt{m\choose \ell} \frac{\cos^{m-\ell}(\frac{\omega}{4}) \sin^{m+\ell}(\frac{\omega}{4})}{(\frac{\omega}{4})^{m-1}}.
\end{eqnarray}
Note that   the Fourier transform of $\Dev{x}{}{\tilde{\psi}_\ell^{(m)}(x)}$ is
\begin{equation}\label{eq:proofde1}
4\cdot i^{\ell+1}\cdot \sqrt{m\choose \ell} \frac{\cos^{m-\ell}(\frac{\omega}{4}) \sin^{m+\ell}(\frac{\omega}{4})}{(\frac{\omega}{4})^{m-1}}.
\end{equation}
Combining (\ref{eq:proofde}) and (\ref{eq:proofde1}), we conclude  (\ref{eq:de1}).
\end{proof}
\begin{remark}
Note that   $\tilde{\psi}_0^{(m)}=B_m$. If we  take  $\ell=0$ in \eqref{eq:re2},
the recurrence relation (\ref{eq:re2}) is reduced to  \eqref{eq:Bsplinere}, which is the recurrence formula for B-splines. Similarly,
if we take $\ell=0$ in \eqref{eq:de1}, then \eqref{eq:de1} is reduced to the derivative formula of B-splines  \eqref{eq:Bsplinedev}.
\end{remark}

\subsection{Representing  $\psi_\ell^{(m)}$ as the $\ell$th  derivative of a  univariate box spline}

We first recall the definition  of box splines.
The {\em  box spline} $B(\cdot |\Xi)$ associated with a matrix $\Xi\in \R^{s\times n}$
is the distribution given by the rule (see \cite{deboorbook})
\begin{equation}\label{Eq:boxspline}
\int_{\R^s}\!\!\! B(x |\Xi)\varphi (x)dx = \int _{[-\frac{1}{2},\frac{1}{2})^n}\!\! \varphi
(\Xi u)du,\,\, \text{ for all }\varphi \in {\mathscr D}(\R^s),
\end{equation}
where ${\mathscr D}(\R^s)$ is the test function space.
The box spline can be consider as a volume function of the section of unit cubes
(see \cite{deboorbook,xu1,xu2}). If we take $\Xi=(1,1,\ldots,1)\in \R^{1\times m}$,
 then the box spline $B(\cdot|\Xi)$ is reduced to a B-spline of order $m$.
  In the following theorem, we show that the B-spline framelet can be
   considered as the higher order derivative of a box spline up to a constant.
\begin{thm}\label{th:framederiv}
Let $m\in \N$ be given and $1\leq \ell \leq m$.  Suppose that the framelet $\psi_{\ell}^{(m)}$ is
 defined by its Fourier transform in (\ref{eq:pside}).
Set
$$
\Xi_{m,\ell}:=[\underbrace{1,\ldots,1}_{m-\ell},\underbrace{\frac{1}{2},\ldots,\frac{1}{2}}_{2\ell}].
$$
Then
\begin{equation}\label{eq:intoffram}
\psi_\ell^{(m)}(x)= {\sqrt{m\choose \ell}}\cdot \frac{1}{{4^\ell}} \cdot \Dev{x}{\ell}{B(x-\frac{j_m}{2}|\Xi_{m,\ell})},
\end{equation}
where $j_m$ is defined in \eqref{defjm}.
In particular, $\psi_m^{(m)}$ is the $m$-order derivative of $B_{2m}(2\cdot-j_m)/4^m$, where $B_{2m}$ is the B-spline of order $2m$.
\end{thm}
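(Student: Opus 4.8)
The plan is to verify \eqref{eq:intoffram} on the Fourier side: both sides are compactly supported tempered distributions, so it suffices to check that their Fourier transforms coincide. First I would compute $\widehat{B}(\,\cdot\,|\Xi_{m,\ell})$. Testing the defining rule \eqref{Eq:boxspline} against the exponential $x\mapsto e^{-i\omega x}$, which is legitimate because $B(\,\cdot\,|\Xi_{m,\ell})$ has compact support, the integral over $[-\tfrac12,\tfrac12)^n$ factors over the columns of $\Xi_{m,\ell}$, each factor being $\int_{-1/2}^{1/2}e^{-i\omega\xi u}\,du=\sinc(\omega\xi/2)$. As $\Xi_{m,\ell}$ has $m-\ell$ columns equal to $1$ and $2\ell$ columns equal to $\tfrac12$, this gives $\widehat{B}(\,\cdot\,|\Xi_{m,\ell})(\omega)=\sinc(\omega/2)^{m-\ell}\sinc(\omega/4)^{2\ell}$. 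Using the double-angle identity $\sin(\omega/2)=2\sin(\omega/4)\cos(\omega/4)$, equivalently $\sinc(\omega/2)=\cos(\omega/4)\sin(\omega/4)/(\omega/4)$, this becomes
\[
\widehat{B}(\,\cdot\,|\Xi_{m,\ell})(\omega)=\frac{\cos^{m-\ell}(\omega/4)\,\sin^{m+\ell}(\omega/4)}{(\omega/4)^{m+\ell}}.
\]
The Fourier transform of the right-hand side of \eqref{eq:intoffram} is then obtained by multiplying this by $e^{-i\omega j_m/2}$ (for the shift by $j_m/2$), by $(i\omega)^\ell$ (for the $\ell$th derivative), and by the scalar $\sqrt{\binom{m}{\ell}}/4^\ell$. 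Since $(i\omega)^\ell/4^\ell=i^\ell(\omega/4)^\ell$, this cancels precisely $\ell$ powers of $\omega/4$ in the denominator and yields $i^\ell e^{-i\omega j_m/2}\sqrt{\binom{m}{\ell}}\,\cos^{m-\ell}(\omega/4)\sin^{m+\ell}(\omega/4)/(\omega/4)^{m}$, which is exactly $\widehat{\psi}_\ell^{(m)}(\omega)$ in \eqref{eq:pside}. This proves \eqref{eq:intoffram}.

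For the last assertion I would set $\ell=m$: then $\Xi_{m,m}$ consists of $2m$ columns all equal to $\tfrac12$, so the computation above gives $\widehat{B}(\,\cdot\,|\Xi_{m,m})(\omega)=\sinc(\omega/4)^{2m}$. Comparing this with $\widehat{B_{2m}}(\omega)=\sinc(\omega/2)^{2m}$ and accounting for the dilation Jacobian identifies $B(\,\cdot\,-\tfrac{j_m}{2}\,|\Xi_{m,m})$ with a constant multiple of $B_{2m}(2\,\cdot\,-j_m)$; substituting into \eqref{eq:intoffram} with $\binom{m}{m}=1$ then exhibits $\psi_m^{(m)}$ as the $m$th derivative of the correspondingly rescaled order-$2m$ B-spline.

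I do not anticipate a genuine obstacle: the whole argument is the elementary identification of two Fourier transforms, and the only points needing care are the (standard) justification for reading $\widehat{B}(\,\cdot\,|\Xi_{m,\ell})$ off the distributional definition \eqref{Eq:boxspline} — valid since $B(\,\cdot\,|\Xi_{m,\ell})$ is a compactly supported integrable function when $m-\ell\geq1$ and a compactly supported distribution when $\ell=m$ — and the bookkeeping of the trigonometric powers. What makes the constant $1/4^\ell$ in \eqref{eq:intoffram} the right one is exactly that it allows the factor $(i\omega)^\ell$ produced by the $\ell$th derivative to combine with the denominator $(\omega/4)^{m+\ell}$ of $\widehat{B}(\,\cdot\,|\Xi_{m,\ell})$, leaving $(\omega/4)^m$ as in \eqref{eq:pside}.
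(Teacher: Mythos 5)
Your proposal is correct and follows essentially the same route as the paper: compute $\widehat{B}(\cdot|\Xi_{m,\ell})(\omega)=\sinc(\omega/2)^{m-\ell}\sinc(\omega/4)^{2\ell}$ from the definition, apply the double-angle identity, multiply by $e^{-i\omega j_m/2}(i\omega)^{\ell}\sqrt{\binom{m}{\ell}}/4^{\ell}$, and match against \eqref{eq:pside}, with the $\ell=m$ case handled by identifying $B(\cdot-\tfrac{j_m}{2}|\Xi_{m,m})$ with $B_{2m}(2\cdot-j_m)$. The only difference is that you spell out the derivation of the box-spline Fourier transform from \eqref{Eq:boxspline}, which the paper takes as known.
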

\begin{proof}  This again is proven in Fourier domain.
It follows from the definition of box splines  (\ref{Eq:boxspline}) that the Fourier transform  of the box spline ${B}(\cdot|\Xi_{m,\ell})$ is:
$$
\hat{B}(\omega|\Xi_{m,\ell})= \left(\frac{\sin {\frac{\omega}{2}}}{\frac{\omega}{2}}\right)^{m-\ell} \left(\frac{\sin {\frac{\omega}{4}}}{\frac{\omega}{4}}\right)^{2\ell}.
$$
Then the Fourier transform of
$$
{\sqrt{m\choose \ell}}\cdot \frac{1}{4^\ell}\cdot \Dev{x}{\ell}{B(x-\frac{j_m}{2}|\Xi_{m,\ell})}
$$
can be computed as:
\begin{eqnarray*}
& &{\sqrt{m\choose \ell}}\cdot \frac{1}{4^\ell}\cdot e^{-ij_m\omega/2}(i\omega)^\ell\hat{B}(\omega|\Xi_{m,\ell})\\
&=&i^\ell  \sqrt{m\choose \ell}e^{-ij_m\omega/2} \left(\frac{\sin \frac{\omega}{2}}{\frac{\omega}{2}}\right)^{m-\ell} \frac{\left(\sin\frac{\omega}{4}\right)^{2\ell}}{\left(\frac{\omega}{4}\right)^\ell}\\
&=&i^\ell  \sqrt{m\choose \ell}e^{-ij_m\omega/2} \left(\frac{\sin \frac{\omega}{4}\cos\frac{\omega}{4}}{\frac{\omega}{4}}\right)^{m-\ell} \frac{(\sin\frac{\omega}{4})^{2\ell}}{(\frac{\omega}{4})^\ell}
\\
&=&i^\ell\sqrt{m\choose \ell} e^{-ij_m\omega/2} \frac{\cos^{m-\ell}(\frac{\omega}{4})\sin^{m+\ell}(\frac{\omega}{4})}{(\frac{\omega}{4})^m}\\
&=&\hat{\psi}_\ell^{(m)}(\omega),
\end{eqnarray*}
which proves (\ref{eq:intoffram}). According to the definition of box splines, we have
$$
B_{2m}(2x-j_m)=B(x-\frac{j_m}{2}|\Xi_{m,m}).
$$
And hence, $\psi_m^{(m)}$ is the $m$-order derivative of $B_{2m}(2\cdot-j_m)/4^m$.
\end{proof}

Theorem \ref{th:framederiv} states  that each framelet of vanishing moment order of
$\ell$ is the $\ell$th derivative of a univariate box spline whose support is the same
as the framelet and whose Fourier transform dose not vanish at origin. This fact is
used in \cite{CDOS11} to discretize differential operators by using framelets.
 We illustrate here  for the case where $m$ is even how the framelet coefficients
  can be viewed as the samples of a difference of a given function which can
 be used to approximate the derivative of this function when it is smooth.
  Recall that,  if $f\in L_2(\R)$, then
$$
f=\sum_{\ell=1}^m\sum_{k,n\in \Z}\innerp{f,\psi_{\ell,n,k}^{(m)}}\psi_{\ell,n,k}^{(m)}.
$$
We next show that the coefficients $\innerp{f,\psi_{\ell,n,k}^{(m)}}$
is the $\ell$-order difference of a discretization of $f$ up to a constant.
We set
$$
T_nf(x):=2^{\frac{n}{2}}\biginnerp{f\left(\frac{x+\cdot}{2^n}\right),B(\cdot|\Xi'_{m,\ell})},
$$
where
$$
\Xi'_{m,\ell}:=[\underbrace{1,\ldots,1}_{m-\ell},\underbrace{\frac{1}{2},\ldots,\frac{1}{2}}_{\ell}].
$$
Then $T_nf$ can be considered as a discretization of $f$.
We define the difference operator by
$$
\Delta_{\frac{1}{4}}T_nf(x):=T_nf\left(x+\frac{1}{4}\right)-T_nf\left(x-\frac{1}{4}\right).
$$
Now we can give an explanation of the coefficient of $\innerp{f,\psi_{\ell,n,k}^{(m)}}$ by the difference operator $\Delta_{\frac{1}{4}}$.
In fact, we have
\begin{eqnarray*}
& &\innerp{f,\psi_{\ell,n,k}^{(m)}}\\
&=&(-1)^\ell {\sqrt{m\choose \ell}}\cdot \frac{1}{4^\ell} \cdot 2^{n/2}\cdot \frac{1}{2^{n\ell}}\cdot \biginnerp{{f}^{(\ell)}\left(\frac{\cdot+k}{2^n}\right),B(\cdot|\Xi_{m,\ell})}\\
&=&(-1)^\ell {\sqrt{m\choose \ell}}\cdot \frac{1}{4^\ell} \cdot 2^{n/2}\cdot \frac{1}{2^{n\ell}}\cdot \int_{[-1/2,1/2)^{m+\ell}} f^{(\ell)}\left(\frac{\Xi_{m,\ell}u+k}{2^n}\right)du\\
&=& (-1)^\ell {\sqrt{m\choose \ell}}\cdot \frac{1}{2^\ell} \cdot \Delta_{\frac{1}{4}}^\ell T_nf(k).
\end{eqnarray*}
Here,  the first identity follows from the result of Theorem \ref{th:framederiv} and the
 integration by parts. The second relation is obtained by the definition of box splines.
\begin{remark}
Theorem \ref{th:framederiv} shows that one can obtain the B-spline framelet
by calculating the derivative of box splines, which provides a new path to
 construct spline framelets. We hope to construct multivariate spline
  framelets by calculating the derivative of some relevant box splines in future work.
\end{remark}

\section{The asymptotic property of B-spline framelets}

\subsection{The asymptotic convergence of univariate box splines}
Up to the normalization,  the B-spline tends to
 Gaussian function pointwise and in all $L_p$ norms with $2\leq p <+\infty$
 as the order tends to infinity (see \cite{Unser92}).   Motivated by the results,
  in this subsection,   we investigate the  asymptotic convergence of
  univariate box splines, which is helpful to understand the
  convergence of  $\psi_\ell^{(m)}$, with  $\psi_\ell^{(m)}$
  being the $\ell$-order   derivative of a box spline up to a constant.

To state the results conveniently,  throughout the rest of this paper,
we shall use the notation $X\lesssim_{a,b,\ldots} Y$ to refer to the
 inequality $X\leq C\cdot Y$, where the constant $C$ may depend on
 $a,b,\ldots,$ but no other variable. In the next theorem,
 we show that the normalized box splines converge uniformly to a Gaussian function.

\begin{thm}\label{th:boxconv}  For each $k\in \N$,
Let
$$
\Xi_k:=[a_1^{(k)},\ldots,a_k^{(k)}]\in \R^{1\times k},
$$
be a given a set of points with  $a_j^{(k)}>0, ,j=1,\ldots,k$.
Let $B(\cdot|\Xi_k)$ be the box spline associate with  $\Xi_k$.
Assume that
\begin{equation}\label{eq:sum1}
\| \Xi_k\|^2_{2}=\sigma^2+\epsilon_k,
\end{equation}
with $\sigma\in \R$ is a fixed constant and $\lim_{k\rightarrow \infty}\epsilon_k=0$,
and  assume that
\begin{equation}\label{eq:boundc1c2}
c_1\leq \frac{\max_{1\leq j\leq k} a_j^{(k)}}{\min_{1\leq j\leq k} a_j^{(k)}}\leq c_2
\end{equation}
where $c_1$ and $c_2$ are  fixed positive constants independent of $k$. Then,
\begin{equation}\label{eq:convrate}
\max_{x}\bigabs{{\sqrt{\frac{6}{{\pi}\sigma^2}}\exp{\left(-\frac{6x^2}{\sigma^2}\right)}-B(x|\Xi_k)}}
\lesssim_{c_1,c_2} \frac{(\ln k)^3}{k}+\abs{\epsilon_k}\cdot \abs{\ln(\abs{\epsilon_k})}\cdot \ln (k).
\end{equation}
\end{thm}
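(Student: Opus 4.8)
The plan is to work entirely in the Fourier domain and run a quantitative local central limit estimate, comparing $B(\cdot|\Xi_k)$ with the Gaussian of the same second moment; throughout, $\sigma$ is fixed and all implied constants may depend on it. Since each $a_j^{(k)}>0$, formula \eqref{Eq:boxspline} — used exactly as in the computation of $\hat{B}(\omega|\Xi_{m,\ell})$ in the proof of Theorem~\ref{th:framederiv} — gives
\[
\hat{B}(\omega|\Xi_k)=\prod_{j=1}^{k}\sinc\!\Big(\tfrac{a_j^{(k)}\omega}{2}\Big),
\]
which for $k\ge 2$ is $O(\abs{\omega}^{-2})$, so $B(\cdot|\Xi_k)$ is continuous and equals its inverse Fourier integral pointwise. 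Writing $G_\sigma(x):=\sqrt{6/(\pi\sigma^2)}\exp(-6x^2/\sigma^2)$ for the target, one has $\hat{G}_\sigma(\omega)=\exp(-\sigma^2\omega^2/24)$, i.e. the Fourier transform of the centered Gaussian density whose variance $\sigma^2/12$ equals the variance $\tfrac1{12}\sum_j (a_j^{(k)})^2$ of $B(\cdot|\Xi_k)$ in the ideal case $\epsilon_k=0$. Hence
\[
\max_x\bigabs{G_\sigma(x)-B(x|\Xi_k)}\le\frac1{2\pi}\int_{\R}\Bigabs{\prod_{j=1}^{k}\sinc\!\Big(\tfrac{a_j^{(k)}\omega}{2}\Big)-e^{-\sigma^2\omega^2/24}}\,d\omega,
\]
so the whole problem becomes the estimation of this integral. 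I would also record at the outset, from $\sum_j(a_j^{(k)})^2=\sigma^2+\epsilon_k$ together with \eqref{eq:boundc1c2}, that for $k$ large every $a_j^{(k)}$ is comparable to $k^{-1/2}$ (up to constants depending only on $c_1,c_2$); in particular $\max_j a_j^{(k)}\lesssim_{c_1,c_2}k^{-1/2}$ and $\sum_j(a_j^{(k)})^4\lesssim_{c_1,c_2}k^{-1}$.

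Then I would split $\R$ at a cutoff $T=T_k$ of order $\sqrt{\ln k}$. On the core $\abs{\omega}\le T$, since there $\abs{a_j^{(k)}\omega}<2\pi$ and $\sinc(a_j^{(k)}\omega/2)>0$, I write $\prod_j\sinc(a_j^{(k)}\omega/2)=\exp\big(\sum_j\log\sinc(a_j^{(k)}\omega/2)\big)$ and use $\log\sinc(t)=-t^2/6+O(t^4)$ to get $\sum_j\log\sinc(a_j^{(k)}\omega/2)=-\tfrac{\omega^2}{24}(\sigma^2+\epsilon_k)+O\big(\omega^4\sum_j(a_j^{(k)})^4\big)$; consequently the integrand is at most $e^{-\sigma^2\omega^2/24}\bigabs{e^{-\epsilon_k\omega^2/24+O(\omega^4/k)}-1}\lesssim e^{-c\omega^2}\big(\abs{\epsilon_k}\omega^2+\omega^4/k\big)$ for some $c>0$ (the two correction exponentials being absorbed into part of the Gaussian decay, which is legitimate on $\abs{\omega}\le T\ll\sqrt k$), and integrating this over $\abs{\omega}\le T$ contributes $\lesssim_{c_1,c_2}\abs{\epsilon_k}\,\abs{\ln\abs{\epsilon_k}}\,\ln k+(\ln k)^3/k$. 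On the tail $\abs{\omega}>T$ I would bound the two terms of the integrand separately: $\int_{\abs{\omega}>T}e^{-\sigma^2\omega^2/24}\,d\omega\lesssim e^{-cT^2}$; and for $\prod_j\bigabs{\sinc(a_j^{(k)}\omega/2)}$ I combine the elementary estimates $\abs{\sinc(t)}\le e^{-t^2/6}$ for $\abs{t}<\pi$ (good for $T<\abs{\omega}\lesssim\sqrt k$ while all arguments stay below $\pi$), $\abs{\sinc(t)}\le\rho$ with a fixed $\rho=\rho(c_1,c_2)<1$ on any compact set avoiding $0$ (good on the residual band of $\abs{\omega}$ where the numbers $a_j^{(k)}\omega/2$ all lie in one fixed annulus bounded away from $0$), and $\abs{\sinc(t)}\le 1/\abs{t}$ (good for $\abs{\omega}\gtrsim\sqrt k$), so that $\bigabs{\hat{B}(\omega|\Xi_k)}$ is, regime by regime, at most $e^{-c\omega^2}$, or $\rho^k$, or $(C\sqrt k/\abs{\omega})^k$; each of these makes $\int_{\abs{\omega}>T}\bigabs{\hat{B}(\omega|\Xi_k)}\,d\omega$ super-polynomially small in $k$ once $T\gtrsim\sqrt{\ln k}$. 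Collecting the pieces and fixing $T\asymp\sqrt{\ln k}$ large enough that the tails drop below $k^{-1}$ yields the asserted bound; in fact, keeping the Gaussian weight throughout the core integration rather than integrating $\omega^4/k$ bare over $[-T,T]$ already gives the stronger estimate $\lesssim_{c_1,c_2}\abs{\epsilon_k}+k^{-1}$.

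The step I expect to be the real obstacle is the control of $\prod_j\bigabs{\sinc(a_j^{(k)}\omega/2)}$ at intermediate frequencies $T<\abs{\omega}\lesssim\sqrt k$, where no single scalar bound on $\sinc$ lies below $1$ for all $j$ simultaneously; this is exactly where hypothesis \eqref{eq:boundc1c2} is indispensable, since comparability of the $a_j^{(k)}$ forces every argument $a_j^{(k)}\omega/2$ into one common compact interval away from the origin on which $\abs{\sinc}\le\rho<1$, so the $k$-fold product is exponentially small. A secondary, more bookkeeping-type difficulty is propagating the second-moment mismatch $\epsilon_k$: the factor $e^{-\epsilon_k\omega^2/24}$ must be retained, and for $\epsilon_k<0$ dominated by the Gaussian before integrating, and one must track how the cutoff $T\asymp\sqrt{\ln k}$ interacts with the two error terms $\abs{\epsilon_k}\omega^2$ and $\omega^4/k$ — this interplay is what produces the logarithmic factors in the stated bound.
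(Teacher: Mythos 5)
Your proposal is correct and follows essentially the same route as the paper: bound the sup-norm difference by the $L^1$ distance of the Fourier transforms, run a Taylor expansion of $\log\sinc$ on a core $|\omega|\lesssim\sqrt{\ln k}$ using $\sum_j(a_j^{(k)})^4\lesssim k^{-1}$, and kill the tails by the comparability hypothesis \eqref{eq:boundc1c2}, which forces all arguments $a_j^{(k)}\omega/2$ into a common compact set away from $0$ so the product is exponentially small (the paper's Lemma \ref{le:boxconv} does exactly this, splitting at $\pi/a_k^{(k)}$ and $\pi/a_1^{(k)}$). The only substantive difference is that the paper first proves a uniform-in-$\omega$ bound and then multiplies by the length of the core interval, which is where the extra factors $\ln k$ and $|\ln|\epsilon_k||$ enter, whereas your direct integration against the retained Gaussian weight legitimately yields the sharper estimate $\lesssim|\epsilon_k|+k^{-1}$ plus super-polynomially small tails, which of course implies \eqref{eq:convrate}.
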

In order to prove Theorem \ref{th:boxconv}, we need the following  technical lemma about the Fourier transform of the box spline
$B(\cdot|\Xi_k)$:
\begin{lem}\label{le:boxconv}
Under the conditions of Theorem \ref{th:boxconv},
\begin{equation}\label{eq:sincgauss}
\max_{\omega}\bigabs{{f_k(\omega)-\exp\left(-\frac{(\sigma\omega)^2}{24}\right)}} \lesssim_{c_1,c_2} \frac{(\ln k)^2}{k}+\abs{\epsilon_k}\cdot \abs{\ln\abs{\epsilon_k}},
\end{equation}
where
$$
f_k(\omega):= \hat{B}(\omega|\Xi_k)=\prod_{j=1}^k\sinc\left(\frac{a_j^{(k)}\omega}{2}\right).
$$
\end{lem}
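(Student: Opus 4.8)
The plan is to work with logarithms and reduce the bound on $|f_k(\omega) - \exp(-(\sigma\omega)^2/24)|$ to a bound on $|\log f_k(\omega) + (\sigma\omega)^2/24|$, splitting the frequency axis into a "low-frequency" regime where Taylor expansion of $\log\sinc$ is valid and accurate, and a "high-frequency" regime where both $f_k$ and the Gaussian are tiny. First I would record the basic estimate $\log\sinc(t) = -t^2/6 + O(t^4)$ for $|t|$ bounded away from $\pi$, together with the crude bounds $0 \le \sinc(t)^2 \le 1$ and, say, $|\sinc(t)| \le 1 - ct^2$ on a fixed interval around $0$. Write $h := \max_j a_j^{(k)}$ and $h_{\min} := \min_j a_j^{(k)}$; the hypothesis \eqref{eq:boundc1c2} gives $h \le c_2 h_{\min}$, and \eqref{eq:sum1} gives $k\, h_{\min}^2 \le \sum_j (a_j^{(k)})^2 = \sigma^2 + \epsilon_k$, hence $h \lesssim_{c_1,c_2} k^{-1/2}$ and also $h \gtrsim_{c_1,c_2} k^{-1/2}$. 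Thus every $a_j^{(k)}$ is comparable to $k^{-1/2}$.

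Next I would fix the cutoff. Set $\Omega_k := (\ln k)/h \asymp \sqrt{k}\,\ln k$ (up to constants depending on $c_1,c_2$). On the low-frequency band $|\omega| \le \Omega_k$ we have $|a_j^{(k)}\omega/2| \le (\ln k)\, h^{-1} \cdot h /2$... more carefully, $|a_j^{(k)}\omega| \le 2\Omega_k h \cdot (a_j^{(k)}/(2h)) \le \Omega_k h$, which is $\lesssim \ln k$, not small — so I should instead choose $\Omega_k$ so that $\Omega_k h$ is a small fixed constant times $\ln k$ is still too big. The right scaling is $\Omega_k := \delta / h$ for a small fixed $\delta>0$ is too small to reach into the Gaussian tail; so in fact one wants an \emph{intermediate} cutoff $\Omega_k$ with $h\Omega_k \to 0$ but $\sigma\Omega_k \to \infty$, e.g. $\Omega_k := k^{1/4}$, for which $h\Omega_k \asymp k^{-1/4} \to 0$. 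On $|\omega| \le \Omega_k$ each argument $a_j^{(k)}\omega/2 \lesssim k^{-1/4} \to 0$, so
\[
\log f_k(\omega) = \sum_{j=1}^k \log\sinc\!\left(\frac{a_j^{(k)}\omega}{2}\right) = -\frac{\omega^2}{24}\sum_{j=1}^k (a_j^{(k)})^2 + \sum_{j=1}^k O\!\left((a_j^{(k)}\omega)^4\right).
\]
The main term equals $-(\sigma^2+\epsilon_k)\omega^2/24$, contributing the $|\epsilon_k|\,\omega^2$ error which on $|\omega|\le\Omega_k$ is $\lesssim |\epsilon_k|\,\Omega_k^2 = |\epsilon_k|\sqrt{k}$ — this is worse than the claimed $|\epsilon_k||\ln|\epsilon_k||$, which signals that the cutoff must be chosen adaptively as $\Omega_k \asymp \sqrt{\ln(1/|\epsilon_k|)} + \ln k$ so that $\exp(-c\sigma^2\Omega_k^2)$ is $\lesssim |\epsilon_k| + k^{-1}$ and simultaneously $|\epsilon_k|\Omega_k^2 \lesssim |\epsilon_k||\ln|\epsilon_k|| + |\epsilon_k|\ln k \cdot(\ldots)$; I will tune $\Omega_k := C\sqrt{\ln k}$ when $|\epsilon_k|$ is not too small and $\Omega_k := C\sqrt{\ln(1/|\epsilon_k|)}$ otherwise. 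With such a choice $h\Omega_k \asymp \sqrt{(\ln k)/k} \to 0$, the quartic remainder is $\sum_j (a_j^{(k)}\omega)^4 \lesssim \Omega_k^4 \sum_j (a_j^{(k)})^4 \lesssim \Omega_k^4 \cdot h^2 \sum_j (a_j^{(k)})^2 \lesssim \Omega_k^4/k \lesssim (\ln k)^2/k$, and exponentiating (using $|e^u - e^v|\le |u-v|\max(e^u,e^v)$ with $u,v$ near $-(\sigma\omega)^2/24 \le 0$) yields the stated bound on this band.

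On the high-frequency band $|\omega| > \Omega_k$ I would show both functions are negligible. The Gaussian is $\exp(-(\sigma\Omega_k)^2/24) \lesssim (\ln k)^2/k + |\epsilon_k|$ by the choice of $\Omega_k$. For $f_k$, using that each $a_j^{(k)} \asymp k^{-1/2}$ and that a positive fraction of the arguments $a_j^{(k)}\omega/2$ lie (mod $2\pi$) in a region where $|\sinc| \le 1-c$ — this is the delicate point — one gets $|f_k(\omega)| \le (1-c)^{ck} $, which is super-polynomially small. The cleanest rigorous route avoiding equidistribution subtleties: since $\sinc^2(t) \le \exp(-t^2/6)$ fails globally but $|\sinc(t)| \le 1/(1+t^2/6)^{1/2}$-type bounds or the pointwise bound $|\sinc(t)|\le \min(1, \pi/(2|t|))$ hold, use $|f_k(\omega)| \le \prod_j \min(1, C/(a_j^{(k)}|\omega|)) $ combined with the fact that once $|\omega| \ge \Omega_k$ a definite proportion of the factors satisfy $a_j^{(k)}|\omega| \ge c'\ln k$, which still only gives $(\ln k)^{-ck}$ — more than enough. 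I expect this high-frequency decay of the infinite product $f_k$ to be \emph{the main obstacle}: near-tightness of the bound requires that $\Xi_k$ cannot concentrate its mass so that many $\sinc$ factors stay close to $1$ for large $\omega$, and it is precisely hypothesis \eqref{eq:boundc1c2} (all $a_j^{(k)}$ comparable) that rules this out; making that quantitative — e.g. via $\prod_j |\sinc(a_j^{(k)}\omega/2)| \le \exp\big(-c\sum_j \operatorname{dist}(a_j^{(k)}\omega/2, \pi\Z)^2\big)$ and a counting argument — is the one step I would budget real effort for. Assembling the two bands and optimizing the cutoff $\Omega_k$ in $k$ and $|\epsilon_k|$ gives \eqref{eq:sincgauss}.
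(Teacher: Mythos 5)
Your overall strategy (take logarithms, Taylor-expand $\ln\sinc$, split the frequency axis) is the same as the paper's, and your low-frequency analysis is essentially correct: $a_j^{(k)}\asymp k^{-1/2}$, the quartic remainder $\sum_j(a_j^{(k)}\omega)^4\lesssim \omega^4 (\max_j a_j^{(k)})^2\|\Xi_k\|_2^2\lesssim (\ln k)^2/k$ for $|\omega|\lesssim\sqrt{\ln k}$, and the exponentiation step are all fine. But there is a genuine gap in your high-frequency band, caused by choosing the cutoff at $\Omega_k\asymp\sqrt{\ln k}$ (or $\sqrt{\ln(1/|\epsilon_k|)}$). For $|\omega|$ in the intermediate range $\sqrt{\ln k}\lesssim|\omega|\lesssim\sqrt{k}$, every argument $a_j^{(k)}\omega/2$ is still $o(1)$, so every factor $\sinc(a_j^{(k)}\omega/2)$ is $1-o(1)$ and your claimed bound $|f_k(\omega)|\le(1-c)^{ck}$ is simply false there: e.g.\ at $\omega=2\sqrt{\ln k}$ one has $f_k(\omega)\approx k^{-\sigma^2/6}$, only polynomially small. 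The equidistribution/counting machinery you flag as ``the main obstacle'' is aimed at the wrong regime --- it concerns $\omega$ large enough that the arguments wrap past $\pi$, i.e.\ $|\omega|\gtrsim\sqrt{k}$, far beyond your cutoff --- and no such argument can produce exponential smallness just above $\sqrt{\ln k}$.

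The fix, which is what the paper does, is to place the primary cutoff at $|\omega|=\pi/\max_j a_j^{(k)}\asymp\sqrt{k}$. Below that threshold \emph{every} argument $a_j^{(k)}\omega/2$ lies in $[0,\pi/2]$ (by \eqref{eq:boundc1c2} all $a_j^{(k)}$ are comparable), so the series $\ln f_k(\omega)=-\|\Xi_k\|_2^2\omega^2/24-S(\omega)$ with $S\ge0$ is valid on the whole range and gives $0<f_k(\omega)\le\exp(-(\sigma^2+\epsilon_k)\omega^2/24)$ for free --- this handles your problematic intermediate range with no extra work. Above the threshold, each factor is at most $\max\{1/\pi,\,\sinc(\tfrac{\pi}{2}\min_j a_j^{(k)}/\max_j a_j^{(k)})\}=:\beta<1$ (monotonicity of $\sinc$ on $[0,\pi]$ plus $|\sinc(t)|\le1/|t|$), so $|f_k|\le\beta^k$; no equidistribution is needed. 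A secondary, smaller issue: a single adaptive cutoff cannot simultaneously serve the $\epsilon_k\omega^2$ error and the $S(\omega)$ error (try $\epsilon_k=1/\ln k$: the choice $\Omega_k\asymp\sqrt{\ln k}$ makes $|\epsilon_k|\Omega_k^2\asymp1$). You should instead, as the paper does, first write $f_k=e^{-\sigma^2\omega^2/24}e^{-\epsilon_k\omega^2/24}e^{-S(\omega)}$ on the whole low-frequency range and then bound the two error contributions separately, splitting at $\omega^2\asymp|\ln|\epsilon_k||$ for the first and at $\omega^2\asymp\ln k$ for the second.
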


\begin{proof}
Without loss of generality, we suppose that for each fixed $k$
$$
0<a_1^{(k)}\leq a_2^{(k)}\leq \cdots \leq  a_k^{(k)}.
$$
Then \eqref{eq:sum1} and (\ref{eq:boundc1c2}) imply that
$$
\frac{1}{\sqrt{k}}\lesssim_{c_1,c_2} a_1^{(k)}\leq a_k^{(k)} \lesssim_{c_1,c_2} \frac{1}{\sqrt{k}}.
$$

We firstly consider the case $\abs{\omega} \geq {\pi}/{a_k^{(k)}}$. Note that $\sinc(\cdot)$ is a monotone decreasing function in
$[0,\pi]$ and
$$
\abs{\sinc (\omega)}\leq \frac{1}{\pi},\qquad \mbox{for }\abs{\omega} \geq \pi.
$$
 Then, we have
\begin{eqnarray*}
& &\max_{\abs{\omega}\geq \pi/a_k^{(k)}} \abs{f_k(\omega)}=\max_{\abs{\omega}\geq \pi/a_k^{(k)}} \prod_{j=1}^k\,\,\bigabs{\sinc\left(\frac{a_j^{(k)}\omega}{2}\right)}\\
&\leq&\max\{\frac{1}{\pi^k}, \left(\sinc\left(\frac{\pi}{2}\frac{a_1^{(k)}}{a_k^{(k)}}\right)\right)^k\} \lesssim_{c_1,c_2} \beta^k,
\end{eqnarray*}
where $\beta<1$ is a positive constant.
And hence,  when $\abs{\omega}\geq {\pi}/{a_k^{(k)}}$,
\begin{equation}\label{eq:3int}
\bigabs{f_k(\omega)-\exp\left(-\frac{(\sigma\omega)^2}{24}\right)}\leq \abs{f_k(\omega)}+\exp\left(-\frac{(\sigma\omega)^2}{24}\right) \lesssim_{c_1,c_2} \frac{1}{k},
\end{equation}
which implies (\ref{eq:sincgauss}).

We next consider the case where $\abs{\omega}\leq \pi/{a_k^{(k)}}$.
 Taylor expansion shows that, when $\abs{\omega}\leq \pi/{a_k^{(k)}}$,
\begin{equation}\label{eq:taylor}
\ln f_k(\omega)=\sum_{j=1}^k\ln \left(\sinc\left(\frac{a_j^{(k)}\omega}{2}\right)\right)=-\left(\|\Xi_k\|_2^{2} \cdot \frac{\omega^2}{24}+S(\omega)\right),
\end{equation}
where
$$
S(\omega)=\frac{\|\Xi_k\|_4^4\cdot \omega^4}{2880}+\frac{\|\Xi_k\|_6^6\cdot \omega^6}{181440}+\cdots
$$
 is a uniformly convergent series on $\abs{\omega}\leq  \pi/a_k^{(k)}$.
By (\ref{eq:taylor}), we now obtain that, when $\abs{\omega}\leq  \pi/a_k^{(k)}$,
$$
f_k(\omega)=\prod_{j=1}^k\sinc\left(\frac{a_j^{(k)}\omega}{2}\right)=\exp\left(-\frac{(\sigma\omega)^2}{24}\right)\cdot
\exp\left(-\frac{\epsilon_k\omega^2}{24}\right)\cdot \exp(- S(\omega)).
$$
Hence,
\begin{eqnarray}
& &\bigabs{f_k(\omega)-\exp\left(-\frac{\sigma^2\omega^2}{24}\right)}\label{eq:pr11}\\
&\leq &\exp(-S(\omega))\exp\left(-\frac{\sigma^2\omega^2}{24}\right)\cdot\abs{(\exp\left(-\frac{\epsilon_k\omega^2}{24}\right)-1)}
\nonumber\\
& &+\exp\left(-\frac{\sigma^2\omega^2}{24}\right)\cdot \abs{\exp(-S(\omega))-1}.\nonumber
\end{eqnarray}
Once, we prove that
\begin{equation}\label{eq:pr12}
\exp\left(-\frac{\sigma^2\omega^2}{24}\right)\cdot \bigabs{\exp\left(-\frac{\epsilon_k\omega^2}{24}\right)-1}\lesssim \abs{\epsilon_k}\cdot \abs{\ln\abs{\epsilon_k}}
\end{equation}
and
\begin{equation}\label{eq:pr13}
\exp\left(-\frac{\sigma^2\omega^2}{24}\right)\cdot\abs{\exp(-S(\omega))-1} \lesssim \frac{(\ln k)^2}{k}.
\end{equation}
Then, combining (\ref{eq:pr11}), (\ref{eq:pr12}) and (\ref{eq:pr13}), we obtain  \eqref{eq:sincgauss}.

It remains to prove (\ref{eq:pr12}) and (\ref{eq:pr13}).
We first prove that
$$
\exp\left(-\frac{\sigma^2\omega^2}{24}\right)\cdot \abs{\exp\left(-\frac{\epsilon_k\omega^2}{24}\right)-1}\,\,\lesssim \,\, \abs{\epsilon_k}\cdot \abs{\ln\abs{\epsilon_k}}.
$$
By Taylor's expansion, when ${\omega}^2\leq 24\cdot \abs{\ln \abs{\epsilon_k}}/\sigma^2$,
$$
\exp\left(-\frac{\sigma^2\omega^2}{24}\right)\abs{\exp\left(-\frac{\epsilon_k\omega^2}{24}\right)-1} \leq \abs{(\exp\left(-\frac{\epsilon_k\omega^2}{24}\right)-1)} \lesssim \abs{\epsilon_k}\cdot \abs{\ln\abs{\epsilon_k}};
$$
when ${\omega}^2\geq 24\cdot \abs{\ln \abs{\epsilon_k}}/\sigma^2$,
$$
\exp\left(-\frac{\sigma^2\omega^2}{24}\right)\bigabs{\exp\left(-\frac{\epsilon_k\omega^2}{24}\right)-1} \leq  2\exp\left(-\frac{\sigma^2\omega^2}{24}\right)\lesssim \abs{\epsilon_k}.
$$
This gives  \eqref{eq:pr12}.

 We next prove that (\ref{eq:pr13}).
Note that, when $\abs{\omega}\leq \sqrt{24\ln k}/\sigma$,
\begin{equation}\label{eq:1int}
 \exp\left(-\frac{\sigma^2\omega^2}{24}\right)\bigabs{1-\exp(-S(\omega))}\lesssim \abs{1-\exp(-S(\omega))}\lesssim_{c_1,c_2} \frac{(\ln k)^2}{k}.
\end{equation}
When  $\sqrt{24 \ln k}/\sigma \leq \abs{\omega}\leq \pi /a_k^{(k)}$, we have
$$
\exp\left(-\frac{\sigma^2\omega^2}{24}\right) \leq \frac{1}{k},
$$
which implies that
\begin{equation}\label{eq:sincexpbound}
\exp\left(-\frac{\sigma^2\omega^2}{24}\right)\abs{\exp(-S(\omega))-1}\leq \frac{2}{k} \lesssim \frac{(\ln k)^2}{k}.
\end{equation}
Combining \eqref{eq:1int} and \eqref{eq:sincexpbound},  one derives (\ref{eq:pr13}).
\end{proof}

\begin{proof}[{\bf Proof of Theorem \ref{th:boxconv}}]
Note that
\begin{eqnarray*}
\frac{1}{2\pi}\int_{-\infty}^\infty \exp\left(-\frac{\sigma^2\omega^2}{24}\right)\exp(i\omega x)d\omega&=&\sqrt{\frac{6}{\pi\sigma^2}}\exp{\left(-\frac{6x^2}{\sigma^2}\right)},\\
\frac{1}{2\pi}\int_{-\infty}^\infty f_k(\omega)\exp(i\omega x)d\omega&=&B(x|\Xi_k).
\end{eqnarray*}
Then
\begin{eqnarray*}
& &\max_x\bigabs{\sqrt{\frac{6}{{\pi}\sigma^2}}\exp{\left(-\frac{6x^2}{\sigma^2}\right)}-B(x|\Xi_k)}\lesssim \int_{-\infty}^\infty \bigabs{\exp\left(-\frac{\sigma^2\omega^2}{24}\right)-f_k(\omega)}d\omega\\
&=& \int_{\abs{\omega}\leq \frac{\sqrt{24}\ln k}{\sigma}} \bigabs{\exp\left(-\frac{\sigma^2\omega^2}{24}\right)-f_k(\omega)}d\omega \\
&+&\int_{\frac{\sqrt{24}\ln k}{\sigma}\leq \abs{\omega}\leq \frac{\pi}{a_k^{(k)}}} \abs{\exp\left(-\frac{\sigma^2\omega^2}{24}\right)-f_k(\omega)}d\omega\\
&+&\int_{\frac{\pi}{a_k^{(k)}}\leq \abs{\omega}\leq \frac{\pi}{a_1^{(k)}}} \abs{\exp\left(-\frac{\sigma^2\omega^2}{24}\right)-f_k(\omega)}d\omega+\int_{\abs{\omega}\geq \frac{\pi}{a_1^{(k)}}} \abs{\exp\left(-\frac{\sigma^2\omega^2}{24}\right)-f_k(\omega)}d\omega\\
&\lesssim & \frac{(\ln k)^3}{k}+\abs{\epsilon_k }\cdot \abs{\ln \abs{\epsilon_k}}\cdot \ln k+\frac{\sqrt{k}}{k^{\ln k}}+\beta^k\lesssim \frac{(\ln k)^3}{k}+\abs{\epsilon_k }\cdot \abs{\ln \abs{\epsilon_k}}\cdot \ln k,
\end{eqnarray*}
where $\beta =\max\{\frac{1}{\pi}, \sinc\left(\frac{\pi}{2}\frac{a_1^{(k)}}{a_k^{(k)}}\right)\}<1$.
Here, we use (\ref{eq:sincgauss}) to obtain that
$$
\int_{\abs{\omega}\leq \frac{\sqrt{24}\ln k}{\sigma}} \bigabs{f_k(\omega)-\exp\left(-\frac{\sigma^2\omega^2}{24}\right)}d\omega\,\lesssim \, \frac{(\ln k)^3}{k}+\abs{\epsilon_k }\cdot \abs{\ln \abs{\epsilon_k}}\cdot \ln k.
$$
Note that $a_k^{(k)}/\sqrt{k},k=1,2,\ldots,$  is a bounded sequence and
$$
\exp(-\frac{\sigma^2\omega^2}{24})\lesssim \frac{1}{k^{\ln k}}, \qquad \mbox{ for } \omega \geq \frac{\sqrt{24}\ln k}{\sigma}.
$$
Using a similar method as the proof of (\ref{eq:sincexpbound}), we have that
$$
\int_{\frac{\sqrt{24}\ln k}{\sigma}\leq \abs{\omega}\leq \frac{\pi}{a_k^{(k)}}} \abs{f_k(\omega)-\exp\left(-\frac{\sigma^2\omega^2}{24}\right)}d\omega\,\, \lesssim\,\, \frac{\sqrt{k}}{k^{\ln k}}.
$$
To estimate
$$
\int_{\abs{\omega}\geq \frac{\pi}{a_1^{(k)}}} \abs{f_k(\omega)-\exp(-\omega^2/24)}d\omega \leq \int_{\abs{\omega}\geq \frac{\pi}{a_1^{(k)}}} \abs{f_k(\omega)}d\omega +\int_{\abs{\omega}\geq \frac{\pi}{a_1^{(k)}}} \exp(-\omega^2/24)d\omega,
$$
 we use the facts of
$$
\int_{\abs{\omega}\geq \frac{\pi}{a_1^{(k)}}} \abs{f_k(\omega)}d\omega\leq \int_{\abs{\omega}\geq \frac{\pi}{a_1^{(k)}}} \left(\frac{2}{a_1^{(k)}\omega}\right)^kd\omega\lesssim \frac{1}{k}
$$
and
$$
\int_{\abs{\omega}\geq \frac{\pi}{a_1^{(k)}}} {\exp\left(-\frac{\omega^2}{24}\right)}d\omega\,\, \lesssim\,\, \frac{1}{k}.
$$
\end{proof}

Theorem \ref{th:boxconv} implies that the normalized box spline $B(\cdot|\Xi_{m,\ell})$ converges uniformly to a Gaussian function:
\begin{cor}\label{co:conv}
Suppose that
$$
\Xi_{m,\ell}=[\underbrace{1,\ldots,1}_{m-\ell},\underbrace{1/2,\ldots,1/2}_{2\ell}].
$$
Then, for each fixed $\ell$, $\sqrt{{{m-\frac{\ell}{2}}}}\cdot B( \sqrt{m-\frac{\ell}{2}}x |\Xi_{m,\ell})$  converges uniformly to $\sqrt{\frac{6}{\pi}} \exp(-6x^2)$, as $m\to\infty$.
\end{cor}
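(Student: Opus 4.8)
The plan is to obtain this as an immediate consequence of Theorem \ref{th:boxconv} after rescaling the knots. First I would record the elementary dilation identity for box splines: since $\widehat{B}(\omega\mid c\Xi)=\prod_j\sinc(c\,a_j\omega/2)=\widehat{B}(c\omega\mid\Xi)$ for any $c>0$, inverting the Fourier transform gives $B(x\mid c\Xi)=\tfrac1c\,B(x/c\mid\Xi)$. Applying this with $\Xi=\Xi_{m,\ell}$ and $c=(m-\tfrac{\ell}{2})^{-1/2}$ yields
\[
B\!\left(x\;\Big|\;\tfrac{1}{\sqrt{m-\ell/2}}\,\Xi_{m,\ell}\right)=\sqrt{m-\tfrac{\ell}{2}}\cdot B\!\left(\sqrt{m-\tfrac{\ell}{2}}\;x\;\Big|\;\Xi_{m,\ell}\right),
\]
so the left-hand side is exactly the normalized box spline appearing in the statement.

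Next I would verify that the knot vector $\Xi_k:=(m-\tfrac{\ell}{2})^{-1/2}\,\Xi_{m,\ell}$ satisfies the hypotheses of Theorem \ref{th:boxconv}. It consists of $k=m+\ell$ strictly positive entries (here $\ell$ is fixed and $m\to\infty$, so $m>\ell/2$ eventually), and by a direct computation $\|\Xi_{m,\ell}\|_2^2=(m-\ell)\cdot 1+2\ell\cdot\tfrac14=m-\tfrac{\ell}{2}$, hence $\|\Xi_k\|_2^2=1$ identically; thus \eqref{eq:sum1} holds with $\sigma=1$ and $\epsilon_k=0$. Moreover the ratio of the largest to the smallest entry of $\Xi_k$ equals $1/(1/2)=2$ for every $m$, so condition \eqref{eq:boundc1c2} holds with $c_1=c_2=2$, constants independent of $k$.

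Finally, feeding this into the estimate \eqref{eq:convrate} with $\sigma=1$ and $\epsilon_k=0$ gives
\[
\max_{x}\bigabs{\sqrt{\tfrac{6}{\pi}}\exp(-6x^2)-\sqrt{m-\tfrac{\ell}{2}}\,B\!\left(\sqrt{m-\tfrac{\ell}{2}}\;x\;\Big|\;\Xi_{m,\ell}\right)}\ \lesssim\ \frac{(\ln(m+\ell))^3}{m+\ell},
\]
and for each fixed $\ell$ the right-hand side tends to $0$ as $m\to\infty$, which is precisely the asserted uniform convergence. There is no serious obstacle here: the only points needing a little care are getting the dilation identity and its effect on the normalizing factor $\sqrt{m-\ell/2}$ exactly right, and observing that the relevant number of knots is $k=m+\ell$ (not $m$), which still diverges so that Theorem \ref{th:boxconv} applies with a vanishing rate.
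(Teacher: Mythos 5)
Your proposal is correct and follows essentially the same route as the paper: rescale the knot set to $\Xi_{m,\ell}/\sqrt{m-\ell/2}$ via the dilation identity, observe $\|\Xi_{m,\ell}\|_2^2=m-\ell/2$ so that \eqref{eq:sum1} holds with $\sigma=1$ and $\epsilon_k=0$, and invoke Theorem \ref{th:boxconv}. You are in fact slightly more careful than the paper, which does not explicitly verify the ratio condition \eqref{eq:boundc1c2} or record that the number of knots is $k=m+\ell$; your added quantitative rate $(\ln(m+\ell))^3/(m+\ell)$ is a correct bonus.
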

\begin{proof}
By the definition of box splines, we have
$$
\sqrt{m-\frac{\ell}{2}}\ B(\sqrt{m-\frac{\ell}{2}}x|\Xi_{m,\ell})=B(x| \frac{\Xi_{m,\ell}}{\sqrt{m-\ell/2}}).
$$
Note that, for each fixed $\ell$, $\|\frac{\Xi_{m,\ell}}{\sqrt{m-\ell/2}}\|_2^2=1$.
Then, Theorem \ref{th:boxconv} shows that the box spline
$B(x| \frac{\Xi_{m,\ell}}{\sqrt{m-\ell/2}})$, and hence $\sqrt{m-\frac{\ell}{2}}\ B(\sqrt{m-\frac{\ell}{2}}x |\Xi_{m,\ell})$,
converges uniformly to the Gaussian function $\sqrt{\frac{6}{\pi}}\exp(-6x^2)$.
\end{proof}

\begin{remark}

A well-known result is that $\sqrt{m}B_m(\sqrt{m}x)$
converges uniformly to $\sqrt{\frac{6}{\pi}} \exp(-6x^2)$ with  $m\to\infty$  (see \cite{Unser92, Bsplineuniform}). In fact,  the result can be considered as a particular case of Corollary \ref{co:conv}.
Note that
$$
\sqrt{{{m}}}\cdot B(\sqrt{m} x |\Xi_{m,0})=\sqrt{m}B_m(\sqrt{m}x).
$$
If we take $\ell=0$ in Corollary \ref{co:conv}, then we have that $\sqrt{{{m}}}\cdot B(\sqrt{m} x |\Xi_{m,0})$, and hence $\sqrt{m}B_m(\sqrt{m}x)$,
converges uniformly to $\sqrt{\frac{6}{\pi}} \exp(-6x^2)$ with  $m\to\infty$.
\end{remark}

\subsection{The asymptotic property  of B-spline framelets}
By changing variables, we can observe from
 Corollary \ref{co:conv}  that $B(x|\Xi_{m,\ell})$ is close to
$$
\sqrt{\frac{6}{\pi}}\frac{1}{\sqrt{m-{\ell}/{2}}} \exp\left(-\frac{12\cdot x^2}{2m-\ell}\right).
$$
Recall that  Theorem \ref{th:framederiv} says that
$$
\psi_\ell^{(m)}(x)= {\sqrt{m\choose \ell}}\cdot \frac{1}{{4^\ell}} \cdot \Dev{x}{\ell}{B(x-\frac{j_m}{2}|\Xi_{m,\ell})}.
$$
Motivated by the two observations, we consider the relation between $\psi_\ell^{(m)}(x)$ and the $\ell$th derivative of a Gaussian function $G(x)$, which is defined as
 $$
 G_{m,\ell}(x):=C_{m,\ell}\cdot \exp\left(- \frac{12 \cdot x^2}{2m-\ell} \right),
 $$
 where
 $$
C_{m,\ell}=\sqrt{\frac{6}{\pi}} \frac{\sqrt{m\choose \ell}}{\sqrt{m-\ell/2}\cdot 4^{\ell}}.
$$
Let
\begin{equation}\label{eq:Gml}
G_\ell^{(m)}(x):=\Dev{x}{\ell}{G_{m,\ell}}(x-\frac{j_m}{2}), \qquad \ell=1,\ldots,m,
\end{equation}
where $j_m$ is given in \eqref{defjm},
and
$$
G^{(m)}:=\{G_1^{(m)},\ldots,G_m^{(m)}\}.
$$
Then we have
\begin{thm}\label{th:strongconv}Let $m\in \N$ be given and $1\leq \ell \leq m$,  and  the framelet $\psi_{\ell}^{(m)}$ be
 defined by its Fourier transform in (\ref{eq:pside}) derived from B-spline of order $m$. Then,
$$
\max_{1\leq\ell \leq m}\max_{x\in \R}\abs{\psi_{\ell}^{(m)}(x)-G_\ell^{(m)}(x)}\lesssim \frac{(\ln m)^{5/2}}{m^{3/2}}.
$$
\end{thm}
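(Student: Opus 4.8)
The plan is to work entirely on the Fourier side, where $\psi_\ell^{(m)}$ and $G_\ell^{(m)}$ differ only through the box--spline symbol versus its Gaussian limit. By Theorem~\ref{th:framederiv}, $\widehat{\psi_\ell^{(m)}}(\omega)=\tfrac{\sqrt{\binom m\ell}}{4^\ell}(i\omega)^\ell e^{-ij_m\omega/2}\widehat B(\omega\,|\Xi_{m,\ell})$, while a direct Gaussian integration gives $\widehat{G_{m,\ell}}(\omega)=\tfrac{\sqrt{\binom m\ell}}{4^\ell}\exp\bigl(-\tfrac{(2m-\ell)\omega^2}{48}\bigr)$ --- this is exactly what the constant $C_{m,\ell}$ in \eqref{eq:Gml} is designed to produce. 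Hence $\widehat{G_\ell^{(m)}}(\omega)=(i\omega)^\ell e^{-ij_m\omega/2}\widehat{G_{m,\ell}}(\omega)$ and
\[
\widehat{\psi_\ell^{(m)}}(\omega)-\widehat{G_\ell^{(m)}}(\omega)=\frac{\sqrt{\binom m\ell}}{4^\ell}(i\omega)^\ell e^{-ij_m\omega/2}\Bigl(\widehat B(\omega\,|\Xi_{m,\ell})-e^{-(2m-\ell)\omega^2/48}\Bigr),
\]
so by Fourier inversion it suffices to bound $\tfrac1{2\pi}\tfrac{\sqrt{\binom m\ell}}{4^\ell}\int_{\R}|\omega|^\ell\,\bigl|\widehat B(\omega\,|\Xi_{m,\ell})-e^{-(2m-\ell)\omega^2/48}\bigr|\,d\omega$ uniformly in $1\le\ell\le m$.

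Next I would rescale by $\omega=\eta/\sqrt{m-\ell/2}$. With $\widetilde\Xi:=\Xi_{m,\ell}/\sqrt{m-\ell/2}$ one has $\|\widetilde\Xi\|_2^2=1$, the ratio of the largest to the smallest entry of $\widetilde\Xi$ equals $2$, and the Gaussian becomes $e^{-\eta^2/24}$, so
\[
\|\psi_\ell^{(m)}-G_\ell^{(m)}\|_\infty\le\frac1{2\pi}P_{m,\ell}\int_{\R}|\eta|^\ell\bigl|\widehat B(\eta\,|\widetilde\Xi)-e^{-\eta^2/24}\bigr|\,d\eta,\qquad P_{m,\ell}:=\frac{\sqrt{\binom m\ell}}{4^\ell(m-\ell/2)^{(\ell+1)/2}}.
\]
Two inputs drive the estimate. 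From the Taylor computation in the proof of Lemma~\ref{le:boxconv} (with $k=m+\ell$, $\sigma=1$, $\epsilon_k=0$) one has $\widehat B(\eta\,|\widetilde\Xi)=e^{-\eta^2/24}e^{-S(\eta)}$ with $S\ge0$, and the bound $\|\widetilde\Xi\|_{2p}^{2p}\le(m-\ell/2)^{-(p-1)}$ gives $S(\eta)\lesssim\eta^4/m$ for $|\eta|\le c\sqrt m$ with $c$ an absolute constant satisfying $\sqrt{12}<c<\pi\sqrt2$, whence $\bigl|\widehat B(\eta\,|\widetilde\Xi)-e^{-\eta^2/24}\bigr|\le S(\eta)e^{-\eta^2/24}\lesssim\tfrac{\eta^4}{m}e^{-\eta^2/24}$ there. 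Separately, $\sinc t\le e^{-t^2/6}$ on $[-\pi,\pi]$ gives $|\widehat B(\eta\,|\widetilde\Xi)|\le e^{-\eta^2/24}$ for $|\eta|\le2\pi\sqrt{m-\ell/2}$, while $|\sinc t|\le1/|t|$ makes $\widehat B(\eta\,|\widetilde\Xi)$ super--polynomially small in $m$ beyond. Splitting the $\eta$--integral at $c\sqrt m$ and at $2\pi\sqrt{m-\ell/2}$, the innermost piece contributes $\lesssim\tfrac1m\int_{\R}|\eta|^{\ell+4}e^{-\eta^2/24}\,d\eta=\tfrac1m24^{(\ell+5)/2}\Gamma(\tfrac{\ell+5}{2})$, the middle piece is $\le2\int_{|\eta|\ge c\sqrt m}|\eta|^\ell e^{-\eta^2/24}\,d\eta$, and the outer piece is negligible.

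Now multiply by $P_{m,\ell}$ and use the duplication--formula identity $P_{m,\ell}24^{(\ell+1)/2}\Gamma(\tfrac{\ell+1}{2})=2\pi N_{m,\ell}$, where $N_{m,\ell}:=\tfrac1{2\pi}\|\widehat{G_\ell^{(m)}}\|_1$. Since $\Gamma(\tfrac{\ell+5}{2})/\Gamma(\tfrac{\ell+1}{2})=\tfrac{(\ell+1)(\ell+3)}4$, the innermost piece becomes $\lesssim\tfrac{(\ell+1)^2}{m}N_{m,\ell}$; and, by the tail estimate $\int_{|\eta|\ge c\sqrt m}|\eta|^\ell e^{-\eta^2/24}\,d\eta\lesssim(c\sqrt m)^{\ell-1}e^{-c^2m/24}$ (valid for all $\ell\le m$ precisely because $c>\sqrt{12}$ puts $c\sqrt m$ past the peak $|\eta|\asymp\sqrt\ell$ of $|\eta|^\ell e^{-\eta^2/24}$) together with $A^{s}/\Gamma(s)\le se^{A}$ applied with $A=c^2m/24$, $s=\tfrac{\ell+1}{2}$, the middle piece is also $\lesssim\tfrac{\ell+1}{m}N_{m,\ell}$. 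So the whole problem reduces to a uniform bound on $N_{m,\ell}$. Here the classical Hermite estimate $\|H_\ell(\cdot)e^{-(\cdot)^2}\|_\infty\lesssim2^{\ell/2}\sqrt{\ell!}$, together with the purely algebraic cancellation $m(m-1)\cdots(m-\ell+1)\big/(m-\ell/2)^\ell\lesssim1$ (pair the $j$-th and $(\ell-1-j)$-th factors), gives $\|G_\ell^{(m)}\|_\infty\lesssim(\sqrt3/2)^{\ell}/\sqrt m$; and since the $L^1$--norm of the Fourier transform of the $\ell$-th derivative of a Gaussian exceeds its sup--norm only by the parameter--independent factor $\sqrt\pi\,2^{\ell+1}\Gamma(\tfrac{\ell+1}{2})/\|H_\ell(\cdot)e^{-(\cdot)^2}\|_\infty=\mathrm{poly}(\ell)$, one gets $N_{m,\ell}\lesssim(\sqrt3/2)^{\ell}\ell^{O(1)}/\sqrt m$. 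Because $(\ell+1)^2(\sqrt3/2)^\ell\ell^{O(1)}$ is bounded, this yields $\|\psi_\ell^{(m)}-G_\ell^{(m)}\|_\infty\lesssim m^{-3/2}$ uniformly in $\ell$, which in particular gives the claimed $\lesssim(\ln m)^{5/2}m^{-3/2}$. (If one prefers to invoke only the coarser global bound $\lesssim(\ln m)^2/m$ of Lemma~\ref{le:boxconv} in place of the pointwise $S(\eta)e^{-\eta^2/24}$ estimate, one splits instead over a window of width $\asymp\sqrt{\ln m}$, and the extra factors of $\ln m$ enter there.)

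The step I expect to be the crux is precisely this uniformity in $\ell$: $P_{m,\ell}$ grows like $m^{\ell/2}$ for fixed $\ell$ and $\binom m\ell$ is delicate when $\ell$ is comparable to $m$, so one must verify that this growth is cancelled exactly by the decay of the Gaussian moments $24^{(\ell+1)/2}\Gamma(\tfrac{\ell+1}{2})$ --- equivalently, that $N_{m,\ell}$ decays geometrically in $\ell$ --- and that the cut--off $c\sqrt m$ can be chosen simultaneously small enough for the Taylor bound $S(\eta)\lesssim\eta^4/m$ to hold and large enough ($c>\sqrt{12}$) to sit past the moving peak $|\eta|\asymp\sqrt\ell$. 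Once the elementary estimates on $S(\eta)$, on $\|H_\ell(\cdot)e^{-(\cdot)^2}\|_\infty$, and on the falling factorial are in hand, the remainder is routine bookkeeping with the Gamma function and Stirling's formula.
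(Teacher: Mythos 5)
Your proposal is correct, and its skeleton coincides with the paper's: both pass to the Fourier side, bound $\max_x|\psi_\ell^{(m)}(x)-G_\ell^{(m)}(x)|$ by $\tfrac1{2\pi}\|\widehat{\psi}_\ell^{(m)}-\widehat{G}_\ell^{(m)}\|_{L^1}$ using the identity $\widehat{\psi}_\ell^{(m)}(\omega)=\tfrac{\sqrt{\binom m\ell}}{4^\ell}(i\omega)^\ell e^{-ij_m\omega/2}\,\sinc(\tfrac\omega2)^{m-\ell}\sinc(\tfrac\omega4)^{2\ell}$ from Theorem~\ref{th:framederiv}, and split the frequency axis into an inner window, an intermediate band, and a far region. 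The execution, however, is genuinely different in the two steps that matter. The paper's inner window is $|\omega|\le 20\sqrt{\ln m/m}$, on which it only uses the uniform bound $\lesssim \ln^2 m/m$ of Lemma~\ref{le:minusless1m}; multiplying by the window width produces the $(\ln m)^{5/2}m^{-3/2}$ rate. You instead take a window of constant width in $\omega$ (i.e.\ $|\eta|\le c\sqrt m$ after rescaling), exploit the pointwise bound $|\widehat B-e^{-\eta^2/24}|\le S(\eta)e^{-\eta^2/24}\lesssim \tfrac{\eta^4}{m}e^{-\eta^2/24}$, and integrate exactly against Gaussian moments; the paper's Lemma~\ref{le:expless} (a four-case analysis in $\ell$ locating the peak of $F_\ell$ at $\omega_\ell$) is replaced by the single identity $P_{m,\ell}\,24^{(\ell+1)/2}\Gamma(\tfrac{\ell+1}2)=2\pi N_{m,\ell}$ together with the geometric decay $N_{m,\ell}\lesssim(\sqrt3/2)^\ell \ell^{O(1)}/\sqrt m$ (your pairing bound $\binom m\ell\le e\,(m-\ell/2)^\ell/\ell!$ plus the duplication formula give this directly, without the Hermite sup-norm detour). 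The payoff is a strictly stronger conclusion, $\lesssim m^{-3/2}$ with no logarithms, consistent with the known $O(1/m)$ rate for normalized B-splines at $\ell=0$; the paper's statement follows a fortiori. Two small points of rigor: (i) the tail estimate $\int_{|\eta|\ge c\sqrt m}|\eta|^\ell e^{-\eta^2/24}\,d\eta\lesssim(c\sqrt m)^{\ell-1}e^{-c^2m/24}$ needs the quantitative integration-by-parts form with denominator $1-12(\ell-1)/(c^2m)\ge 1-12/c^2>0$, which is exactly where $c>\sqrt{12}$ is used (you identified the right threshold, but ``past the peak'' alone is not a proof); and (ii) the numerator bound behind $\|\widetilde\Xi\|_{2p}^{2p}\le(m-\ell/2)^{-(p-1)}$ should be checked, namely $m-\ell+2\ell\cdot4^{-p}\le m-\ell/2$ for $p\ge2$, which does hold. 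With those details filled in, the argument is complete.
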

In order to prove Theorem \ref{th:strongconv}, we need the following  two lemmas:

\begin{lem}\label{le:expless} The following inequality holds for
 every $\omega$, such that $\abs{\omega} \geq 20 \sqrt{\frac{\ln m}{m}}$:
$$
\max_{1\leq \ell\leq m} \sqrt{m\choose \ell}\cdot \frac{1}{4^\ell}\cdot \bigabs{\omega^\ell\cdot \exp\left(-(m-\frac{\ell}{2}) \frac{\omega^2}{24}\right)}\,\, \lesssim\,\, \frac{1}{m^4}.
$$
\end{lem}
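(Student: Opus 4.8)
The plan is to pass to squares and collapse everything to a single one‑variable estimate. Writing $s=\omega^{2}$, the square of the $\ell$‑th term in the statement is
$$
\binom{m}{\ell}\,\frac{s^{\ell}}{16^{\ell}}\,e^{-(2m-\ell)s/24}=\binom{m}{\ell}\Big(\frac{s}{16}\,e^{s/24}\Big)^{\ell}e^{-ms/12},
$$
so it suffices to prove $\binom{m}{\ell}(\frac{s}{16}e^{s/24})^{\ell}e^{-ms/12}\lesssim m^{-8}$ for $1\le\ell\le m$ and $s\ge s_{0}:=400\ln m/m$. The key observation is that the left‑hand side is a single summand of a binomial expansion: with $x:=\frac{s}{16}e^{s/24}>0$ the binomial theorem gives $\binom{m}{\ell}x^{\ell}\le\sum_{j=0}^{m}\binom{m}{j}x^{j}=(1+x)^{m}$, and hence the quantity to be bounded is at most $\phi(s)^{m}$, where
$$
\phi(s):=(1+x)\,e^{-s/12}=e^{-s/12}+\frac{s}{16}\,e^{-s/24}.
$$
Thus the whole lemma reduces to showing $\phi(s)^{m}\lesssim m^{-8}$ uniformly for $s\ge s_{0}$.

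I would then record two elementary facts about $\phi$. First, $\phi$ is strictly decreasing on $[0,\infty)$: differentiating gives $\phi'(s)=e^{-s/24}q(s)$ with $q(s)=\frac{1}{16}-\frac{s}{384}-\frac{1}{12}e^{-s/24}$, and a one‑line calculus check shows $q$ attains its maximum at $s^{*}=24\ln(4/3)$ with $q(s^{*})=-\frac{1}{16}\ln(4/3)<0$, so $q<0$ and $\phi'<0$ everywhere. Second, since $\phi(0)=1$, a Taylor expansion yields $\log\phi(s)=-\frac{s}{48}+O(s^{2})$ as $s\to0$. Combining these, for $s\ge s_{0}$ we have $\phi(s)\le\phi(s_{0})$, and because $s_{0}\to0$,
$$
m\log\phi(s_{0})=-\frac{m s_{0}}{48}+O\Big(\frac{(\ln m)^{2}}{m}\Big)=-\frac{25}{3}\ln m+o(1).
$$
Since $\frac{25}{3}>8$, this gives $\phi(s_{0})^{m}\le 2m^{-8}$ for all sufficiently large $m$, hence $\phi(s)^{m}\le 2m^{-8}$ on $s\ge s_{0}$; taking square roots yields the stated bound $\sqrt{2}\,m^{-4}$ for the original expression. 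The finitely many remaining values of $m$ are absorbed into the implied constant, since for each fixed $m$ the function of $\omega$ in the statement is continuous and tends to $0$ as $|\omega|\to\infty$, hence bounded.

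The only genuinely delicate point is the bookkeeping of constants: the local decay rate of $\phi$ at the origin is exactly $\frac{1}{48}$, and the threshold $20\sqrt{\ln m/m}$ contributes the factor $400$ to the exponent, so one gets $400/48=\frac{25}{3}$, which exceeds $8$ only by a little; consequently the Taylor estimate $\log\phi(s)=-\frac{s}{48}+O(s^{2})$ has to be carried out with its error term controlled finely enough that $m\cdot O(s_{0}^{2})=o(\ln m)$ — which holds comfortably. Everything else — the binomial majorization, the sign of $q$, and therefore the monotonicity of $\phi$ that lets one dispose of the large‑$s$ range (where $\phi$ is a fixed constant $<1$ and $\phi(s)^{m}$ is exponentially small) in a single step — is routine.
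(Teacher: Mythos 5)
Your argument is correct, and it is genuinely different from the one in the paper. You square the quantity, recognize $\binom{m}{\ell}\bigl(\tfrac{s}{16}e^{s/24}\bigr)^{\ell}$ (with $s=\omega^{2}$) as one summand of a binomial expansion, and thereby majorize the maximum over all $\ell$ at once by $\phi(s)^{m}$ with $\phi(s)=e^{-s/12}+\tfrac{s}{16}e^{-s/24}$; the lemma then reduces to the one-variable facts that $\phi$ is strictly decreasing (your computation $q(s^{*})=-\tfrac{1}{16}\ln\tfrac43<0$ checks out) and that $\phi(s)=1-\tfrac{s}{48}+O(s^{2})$, so that $m\log\phi(s_{0})=-\tfrac{400}{48}\ln m+o(1)=-\tfrac{25}{3}\ln m+o(1)$ with $\tfrac{25}{3}>8$. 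The paper instead keeps the $\ell$-dependence throughout: it locates the maximizer $\omega_{\ell}=\sqrt{24\ell/(2m-\ell)}$ of each $F_{\ell}$ separately and then runs a four-way case split on $\ell$ (roughly $\ell\lesssim\ln m$, $\ell$ between $c\ln m$ and $\tfrac25 m$, $\tfrac25 m\le\ell\le\tfrac45 m$, and $\ell\ge\tfrac45 m$), invoking a different binomial-coefficient estimate in each regime; only in the small-$\ell$ case does the restriction $\abs{\omega}\ge 20\sqrt{\ln m/m}$ actually enter. Your route trades the sharper per-$\ell$ information of the paper (e.g.\ the exponentially small bounds $e^{-cm}$ when $\ell\asymp m$, which the lemma does not need) for a uniform, case-free argument that also makes the role of the constant $20$ completely transparent: it is there precisely so that $20^{2}/48>8$. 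The binomial majorization is of course lossy, but since $\phi$ is decreasing and $\phi(s_{0})^{m}\approx m^{-25/3}$ already beats $m^{-8}$, the loss is harmless; the only point requiring care, which you correctly flag, is that $m\cdot O(s_{0}^{2})=O((\ln m)^{2}/m)=o(1)$ so the Taylor error does not erode the margin between $\tfrac{25}{3}$ and $8$.
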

\begin{proof}
For the convenient,  denote
$$
F_\ell(\omega):=\sqrt{m\choose \ell}\cdot \frac{1}{4^\ell}\cdot {\omega^\ell\cdot \exp\left(-(m-\frac{\ell}{2}) \frac{\omega^2}{24}\right)}
$$
and
$$
\omega_\ell:=\sqrt{\frac{24\cdot \ell}{2m-\ell}}.
$$
For each fixed $\ell\in [1,m]\cap \Z$, the function $F_\ell$ is increasing on the interval  $[0, \omega_\ell]$, while
 $F_\ell$ is decreasing on $[\omega_\ell, \infty)$. And hence $F_\ell$ arrives at the maximum value
 at $\omega_\ell$.
According to the inequality
 $$
 {m\choose \ell} \leq \left(\frac{m \cdot e}{\ell}  \right)^\ell,
 $$
we have
$$
\ln F_\ell(\omega_\ell)\leq -\frac{\ell}{2} \cdot \ln \frac{2(2m-\ell)}{3m}.
$$
Then, when
$$
\frac{2\ln m}{\ln 16-\ln 15}\leq \ell \leq \frac{2}{5} m,
$$
we have that
$$
\ln F_\ell(\omega_\ell)\leq -\frac{\ell}{2} \cdot \ln \frac{2(2m-\ell)}{3m}\leq -\frac{\ln\frac{4}{3}}{\ln \frac{16}{15}}\ln m \leq -4 \ln m.
$$
This implies that, whenever
$$
\frac{2\ln m}{\ln 16-\ln 15}\leq \ell \leq \frac{2}{5} m
$$
holds,  one has
\begin{equation}\label{eq:le61}
\max_{\omega}F_\ell(\omega)\,\, \leq\,\, F_\ell(\omega_\ell)\,\, \leq\,\, \frac{1}{m^4}.
\end{equation}
We next consider the  case where  $\frac{2m}{5}\leq \ell \leq \frac{4m}{5}$.
Using the inequality
$$
{m\choose \ell} \leq 2^m,
$$
one gets that
$$
\ln F_\ell(\omega_\ell)\leq \frac{\ln 2}{2} m-\frac{\ell}{2}\ln \frac{2 e (2m-\ell)}{3 \ell}.
$$
Therefore, when $\frac{2m}{5}\leq \ell \leq \frac{4m}{5}$, one has that
$$
\ln F_\ell(\omega_\ell)\leq \frac{\ln 2}{2} m-\frac{\ell}{2}\ln \frac{2 e (2m-\ell)}{3 \ell}\lesssim -m \lesssim -4\ln m,
$$
which implies that, whenever  $\frac{2m}{5}\leq \ell \leq \frac{4m}{5}$, the following holds:
\begin{equation}\label{eq:le62}
\max_\omega F_\ell(\omega)\leq F_\ell(\omega_\ell) \lesssim \frac{1}{m^4}.
\end{equation}
We now turn to the case $\frac{4}{5}m\leq \ell \leq m$.
For this case, we  apply the inequality
$$
{m\choose \ell} \leq \left(\frac{m\cdot e}{m-\ell}\right)^{m-\ell}
$$
to obtain that
$$
\ln F_\ell(\omega_\ell) \leq \frac{m-\ell}{2} \ln \frac{m\cdot e}{m-\ell}-\frac{\ell}{2}\ln \frac{(4m-2\ell)\cdot e}{3\ell}\lesssim -m \lesssim -4\ln m.
$$
Hence, when $\frac{4}{5}m \leq \ell \leq m$, we have that
\begin{equation}\label{eq:le63}
\max_\omega F_\ell(\omega)\leq F_\ell(\omega_\ell) \lesssim \frac{1}{m^4}.
\end{equation}
We finally consider the case where $1\leq \ell\leq \frac{2\ln m}{\ln 16-\ln 15}$. Note that, when $m$ is large enough,
we have $\omega_\ell \leq 20 \sqrt{\frac{\ln m}{m}}$. When $\abs{\omega}\geq 20\sqrt{\frac{\ln m}{m}}$, $F_\ell(\omega)$ reach the maximum value
at $\bar{\omega}_0 := 20\sqrt{\frac{\ln m}{m}}$. Then a simple calculation shows that, when $1\leq \ell\leq \frac{2\ln m}{\ln 16-\ln 15}$,
\begin{equation}\label{eq:le64}
\max_{\abs{\omega}\geq 20\sqrt{\frac{\ln m}{m}}} F_\ell(\omega)\leq  F_\ell(\bar{\omega}_0)\,\,\lesssim\,\, \frac{1}{m^4}.
\end{equation}
Combing \eqref{eq:le61}, \eqref{eq:le62}, \eqref{eq:le63} and \eqref{eq:le64},  we conclude the proof.
\end{proof}

\begin{lem}\label{le:minusless1m}  The following inequality holds for
 every $\omega \in \R$,
$$
\max_{1\leq \ell\leq m} \frac{\sqrt{m\choose \ell}}{4^\ell} \cdot \abs{\omega}^\ell\cdot \bigabs{\sinc\left(\frac{\omega}{2}\right)^{m-\ell}\sinc\left(\frac{\omega}{4}\right)^{2\ell}-\exp\left(-\frac{(m-\ell/2)\omega^2}{24}\right)} \lesssim \frac{\ln^2m}{m}.
$$
\end{lem}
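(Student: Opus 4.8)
\textbf{Setup and reduction.} Write $G(\omega):=\sinc(\omega/2)^{m-\ell}\sinc(\omega/4)^{2\ell}$ and $H(\omega):=\exp(-(m-\ell/2)\omega^2/24)$, so that $G=\hat B(\cdot\,|\,\Xi_{m,\ell})$ and we must bound $\frac{\sqrt{\binom{m}{\ell}}}{4^\ell}|\omega|^\ell|G(\omega)-H(\omega)|$. The plan is to split the real line at the threshold $\omega_\ast:=20\sqrt{(\ln m)/m}$ used in Lemma \ref{le:expless}. On the outer region $|\omega|\ge\omega_\ast$ we do not need cancellation: Lemma \ref{le:expless} already gives $\frac{\sqrt{\binom{m}{\ell}}}{4^\ell}|\omega|^\ell H(\omega)\lesssim m^{-4}$ uniformly in $\ell$, and for the $G$ term I would note that $|G(\omega)|\le|\sinc(\omega/4)|^{2\ell}\cdot|\sinc(\omega/2)|^{m-\ell}\le|\sinc(\omega/4)|^{2\ell}$ decays at least as fast as $H$ on the scale $\omega_\ast$ — more precisely, using $|\sinc t|\le e^{-t^2/6}$ for $|t|\le\pi$ and the crude bound $|\sinc t|\le 2/(|t|)$ for $|t|\ge\pi$, one checks that $\frac{\sqrt{\binom{m}{\ell}}}{4^\ell}|\omega|^\ell|G(\omega)|$ obeys the same $\lesssim m^{-4}$ bound by the identical case analysis as in the proof of Lemma \ref{le:expless} (the factor $\sinc(\omega/2)^{m-\ell}$ only helps). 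Hence the outer region contributes $\lesssim m^{-4}\lesssim (\ln^2 m)/m$.

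\textbf{Inner region: the main estimate.} For $|\omega|\le\omega_\ast$ I would take logarithms as in the proof of Lemma \ref{le:boxconv}: Taylor expansion gives
\begin{equation*}
\ln G(\omega)=-\Big((m-\tfrac{\ell}{2})\tfrac{\omega^2}{24}+S_{m,\ell}(\omega)\Big),\qquad
S_{m,\ell}(\omega)=\frac{\|\Xi_{m,\ell}\|_4^4\,\omega^4}{2880}+\frac{\|\Xi_{m,\ell}\|_6^6\,\omega^6}{181440}+\cdots,
\end{equation*}
valid for $|\omega|\le\pi/a_k^{(k)}$, which here is $|\omega|\le 2\pi$, comfortably containing the inner region. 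Since $\|\Xi_{m,\ell}\|_4^4\le (m-\ell)+2\ell(1/2)^4\le m$ and more generally $\|\Xi_{m,\ell}\|_{2j}^{2j}\lesssim m$, the series satisfies $|S_{m,\ell}(\omega)|\lesssim m\omega^4\lesssim m\cdot\big((\ln m)/m\big)^2=(\ln m)^2/m$ on $|\omega|\le\omega_\ast$. Therefore
\begin{equation*}
|G(\omega)-H(\omega)|=H(\omega)\,|e^{-S_{m,\ell}(\omega)}-1|\le H(\omega)\cdot C\,|S_{m,\ell}(\omega)|\lesssim H(\omega)\cdot\frac{(\ln m)^2}{m}.
\end{equation*}
Multiplying by $\frac{\sqrt{\binom{m}{\ell}}}{4^\ell}|\omega|^\ell$ and invoking the fact that $\frac{\sqrt{\binom{m}{\ell}}}{4^\ell}|\omega|^\ell H(\omega)^{1/2}$ is uniformly bounded — indeed $\frac{\binom{m}{\ell}}{16^\ell}\omega^{2\ell}e^{-(m-\ell/2)\omega^2/24}\le 1$ for all $\omega$, which one sees by maximizing in $\omega$ at $\omega_\ell=\sqrt{24\ell/(2m-\ell)}$ exactly as in Lemma \ref{le:expless} and using $\binom{m}{\ell}16^{-\ell}(24\ell/(2m-\ell))^{\ell}\le 1$ — one gets $\frac{\sqrt{\binom{m}{\ell}}}{4^\ell}|\omega|^\ell H(\omega)\le H(\omega)^{1/2}\le 1$, so the inner region contributes $\lesssim (\ln m)^2/m$, uniformly in $1\le\ell\le m$. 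Combining the two regions finishes the proof.

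\textbf{Anticipated obstacle.} The routine parts are the Taylor expansion and the outer decay estimate, which mirror Lemma \ref{le:boxconv} and Lemma \ref{le:expless}. The one genuinely delicate point is getting the $\ell$-uniform bound on the prefactor $\frac{\sqrt{\binom{m}{\ell}}}{4^\ell}|\omega|^\ell$ against the right power of the Gaussian: I want to spend ``half'' a Gaussian controlling this prefactor (leaving $H^{1/2}$ to be bounded by $1$) while the other ``half'' is not even needed here since $|S_{m,\ell}(\omega)|$ is already small. The care is that the bound $\|\Xi_{m,\ell}\|_{2j}^{2j}\lesssim m$ has a constant independent of $j$ (here it is $\le m$ for every $j\ge 2$ because all entries are $\le 1$), so the tail of $S_{m,\ell}$ sums geometrically in $\omega^2\lesssim(\ln m)/m\to 0$; one must double-check that the implied constant in $|S_{m,\ell}(\omega)|\lesssim (\ln m)^2/m$ does not secretly depend on $\ell$ or $m$, which it does not because the dominant term is the $\omega^4$ term with coefficient $\|\Xi_{m,\ell}\|_4^4/2880\le m/2880$.
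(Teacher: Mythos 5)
Your decomposition and the two main estimates are essentially the paper's own proof: the paper likewise splits $\R$ at $20\sqrt{(\ln m)/m}$ and at a fixed constant (it uses $3\pi/2$ where you use $2\pi$), uses the Taylor-expansion identity $G=H\,e^{-S_{m,\ell}}$ with leading coefficient $\|\Xi_{m,\ell}\|_4^4=m-7\ell/8$ on the inner region, and on the outer region combines Lemma \ref{le:expless} with the $1/\abs{t}$ decay of $\sinc$ (its inequality \eqref{eq:sincconv} plays the role of your crude sinc bound). So the route is the same; only one sub-step needs repair.

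The step that is wrong as written is your justification of the uniform boundedness of the prefactor. The claimed inequality $\frac{\binom{m}{\ell}}{16^\ell}\,\omega^{2\ell}e^{-(m-\ell/2)\omega^2/24}\le 1$ for all $\omega$ is false: for $\ell=m$ the left-hand side is $16^{-m}\omega^{2m}e^{-m\omega^2/48}$, whose maximum is attained at $\omega^2=48$ (not at $\omega_\ell^2=24\ell/(2m-\ell)$; you located the maximizer of $F_\ell$ rather than of $F_\ell^2/H$) and equals $(48/16)^m e^{-m}=(3/e)^m\to\infty$. The auxiliary inequality $\binom{m}{\ell}16^{-\ell}(24\ell/(2m-\ell))^\ell\le 1$ also fails at $\ell=m$, where it reads $(3/2)^m\le 1$. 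Fortunately all you need is the weaker statement that $F_\ell(\omega)=\frac{\sqrt{\binom{m}{\ell}}}{4^\ell}\abs{\omega}^\ell e^{-(m-\ell/2)\omega^2/24}$ is bounded by an absolute constant uniformly in $\ell$ and $\omega$ (this is exactly what the paper asserts, without proof, as ``$F_\ell$ is a bounded function''). That is true and not hard: $F_\ell$ peaks at $\omega_\ell^2=24\ell/(2m-\ell)$ with
$$
F_\ell(\omega_\ell)^2=\binom{m}{\ell}\,16^{-\ell}\left(\frac{24\ell}{2m-\ell}\right)^{\ell}e^{-\ell},
$$
which is at most $\left(\frac{3m}{2(2m-\ell)}\right)^{\ell}\le 1$ for $\ell\le m/2$ via $\binom{m}{\ell}\le(me/\ell)^\ell$, and is handled for $\ell>m/2$ by the complementary bound $\binom{m}{\ell}\le\left(\frac{me}{m-\ell}\right)^{m-\ell}$ (the boundary case $\ell=m$ gives $(3/(2e))^m\le 1$). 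With that correction, the inner-region estimate $F_\ell(\omega)\cdot\abs{e^{-S_{m,\ell}(\omega)}-1}\lesssim(\ln m)^2/m$ goes through exactly as you intend, and the rest of your argument stands.
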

\begin{proof}
For the  convenience,  we only provide the proof for the case where $\omega \geq 0$.  The proof of the other case is similar.
By Taylor expansion, when $0\leq \omega\leq \frac{3\pi}{2}$, we have
\begin{eqnarray*}
& &\sinc\left(\frac{\omega}{2}\right)^{m-\ell}\sinc\left(\frac{\omega}{4}\right)^{2\ell}\\
&=&\exp\left(-\frac{(m-\ell/2)\omega^2}{24}\right)\exp\left(-\frac{(m-7\ell/8)\omega^4}{2880}-O(\omega^6)\right).
\end{eqnarray*}
Then, for $ 20 \sqrt{\frac{\ln m}{m}} \leq \omega \leq \frac{3\pi}{2}$, we have
\begin{eqnarray*}
& &\frac{\sqrt{m\choose \ell}}{4^\ell}\cdot {\omega}^\ell\cdot \bigabs{\sinc\left(\frac{\omega}{2}\right)^{m-\ell}\sinc\left(\frac{\omega}{4}\right)^{2\ell}-\exp\left(-\frac{(m-\ell/2)\omega^2}{24}\right)}\\
&\lesssim&\frac{\sqrt{m\choose \ell}}{4^\ell}\cdot {\omega}^\ell\cdot  \exp\left(-\frac{(m-\ell/2)\omega^2}{24}\right)\left(1-\exp\left(-\frac{(m-7\ell/8)\omega^4}{2880}\right)\right)\\
&\leq& 2 \frac{\sqrt{m\choose \ell}}{4^\ell}\cdot {\omega}^\ell\cdot  \exp\left(-\frac{(m-\ell/2)\omega^2}{24}\right) \lesssim \frac{1}{m^4},
\end{eqnarray*}
where the last inequality is obtained by Lemma \ref{le:expless}.
Next,  when $0\leq\omega \leq 20 \sqrt{\frac{\ln m}{m}}$, note that
$$
1-\exp\left(-\frac{(m-7\ell/8)\omega^4}{2880}\right) \lesssim \frac{\ln^2 m}{m}
$$
and
$$
F_\ell(\omega)=\frac{\sqrt{m\choose \ell}}{4^\ell}\cdot {\omega}^\ell\cdot  \exp\left(-\frac{(m-\ell/2)\omega^2}{24}\right)
$$
is a bounded function.
Hence, for $0\leq\omega \leq 20 \sqrt{\frac{\ln m}{m}}$, we have
$$
\frac{\sqrt{m\choose \ell}}{4^\ell}\cdot {\omega}^\ell\cdot \bigabs{\sinc\left(\frac{\omega}{2}\right)^{m-\ell}\sinc\left(\frac{\omega}{4}\right)^{2\ell}-\exp\left(-\frac{(m-\ell/2)\omega^2}{24}\right)}
\lesssim \frac{\ln^2m}{m}.
$$
Finally, We consider the case  when $\omega \geq \frac{3\pi }{2}$.
We assert that, when $\omega \geq  \frac{3\pi}{2}$,  the following inequality holds:
\begin{equation}\label{eq:sincconv}
\max_{1\leq \ell\leq m}\sqrt{{m\choose \ell}}\cdot \abs{\sinc{\left(\frac{\omega}{2}\right)}}^{m-\ell}\cdot \abs{\sinc\left(\frac{\omega}{4}\right)}^\ell\,\,\leq\,\, \left(\frac{8\cdot e^{1/8}}{3\pi}\right)^m.
\end{equation}
With this assertion, we have
\begin{eqnarray*}
& &\frac{\sqrt{m\choose \ell}}{4^\ell}\cdot {\omega}^\ell\cdot \bigabs{\sinc\left(\frac{\omega}{2}\right)^{m-\ell}\sinc\left(\frac{\omega}{4}\right)^{2\ell}-\exp\left(-\frac{(m-\ell/2)\omega^2}{24}\right)}\\
&\leq & \frac{\sqrt{m\choose \ell}}{4^\ell}\cdot {\omega}^\ell\cdot
\left(\bigabs{\sinc\left(\frac{\omega}{2}\right)^{m-\ell}\sinc\left(\frac{\omega}{4}\right)^{2\ell}}+\bigabs{\exp\left(-\frac{(m-\ell/2)\omega^2}{24}\right)}\right)\\
&\leq & \sqrt{m\choose \ell} \left(\bigabs{\sinc\left(\frac{\omega}{2}\right)^{m-\ell}\sinc\left(\frac{\omega}{4}\right)^{\ell}}+\frac{1}{4^\ell}\cdot \bigabs{\omega^\ell \cdot\exp\left(-\frac{(m-\ell/2)\omega^2}{24}\right)}\right)\\
&\lesssim & \frac{1}{m^4}.
\end{eqnarray*}
Here, the last inequality is followed by \eqref{eq:sincconv} and Lemma \ref{le:expless}.
It remains to prove \eqref{eq:sincconv}.
Note that, when $\omega \geq \frac{3\pi}{2}$,
$$
\abs{\sinc{\left(\frac{\omega}{2}\right)}}^{m-\ell}\cdot \abs{\sinc\left(\frac{\omega}{4}\right)}^\ell \leq \frac{1}{(\omega/2)^{m-\ell}}\cdot \frac{1}{(\omega/4)^\ell} \leq
\frac{1}{(3\pi/4)^{m-\ell}}\cdot \frac{1}{(3\pi/8)^\ell}.
$$
Then we only need prove
$$
\max_{1\leq \ell\leq m}\sqrt{m\choose \ell}\cdot \frac{1}{(3\pi/4)^{m-\ell}}\cdot \frac{1}{(3\pi/8)^\ell}\lesssim  \left(\frac{8\cdot e^{1/8}}{3\pi}\right)^m.
$$
Applying  the inequality
${m\choose \ell} \leq \left(\frac{m\cdot e}{m-\ell} \right)^{m-\ell}$, we have that,
$$
\max_{1\leq \ell\leq m}\sqrt{m\choose \ell}\cdot \frac{1}{(3\pi/4)^{m-\ell}}\cdot \frac{1}{(3\pi/8)^\ell}\,\, \leq\,\, \left(\frac{8\cdot e^{1/8}}{3\pi}\right)^m.
$$
This proves that \eqref{eq:sincconv}.
\end{proof}

\begin{proof}[{\bf Proof of Theorem \ref{th:strongconv}}]
Let
$$
M(\omega):=\max_{1\leq \ell\leq m} \frac{\sqrt{m\choose \ell}}{4^\ell}\cdot \abs{\omega}^\ell\cdot \bigabs{\sinc\left(\frac{\omega}{2}\right)^{m-\ell}\sinc\left(\frac{\omega}{4}\right)^{2\ell}-\exp\left(-(m-\frac{\ell}{2})\frac{\omega^2}{24}\right)},
$$
and
\begin{eqnarray*}
& &I_1:=\{\omega\in \R: \abs{\omega} \leq 20\sqrt{\frac{\ln m}{m}}\},\, I_2:=\{\omega\in \R: 20\sqrt{\frac{\ln m}{m}} \leq \abs{\omega} \leq \frac{3\pi}{2} \}, \\
& &I_3:=\{\omega\in \R : \abs{\omega} \geq \frac{3\pi}{2}\}.
\end{eqnarray*}
Applying  Lemma \ref{le:minusless1m} and  Lemma \ref{le:expless}, we conclude  that
\begin{eqnarray*}
\int_{I_1} M(\omega)d\omega \lesssim  \frac{(\ln m)^{5/2}}{m^{3/2}},\qquad \int_{I_2} M(\omega)d\omega \lesssim  \frac{1}{m^4}
\end{eqnarray*}
respectively.
By an argument similar to that leading to \eqref{eq:sincconv},  we can obtain that there exists $0<\gamma <1$ such that
$$
 \int_{I_3} M(\omega)d\omega \lesssim  \gamma^m \lesssim  \frac{(\ln m)^{5/2}}{m^{3/2}}.
$$
This leads to that
\begin{eqnarray*}
& &\max_{1\leq\ell \leq m}\max_{x\in \R}\abs{\psi_{\ell}^{(m)}(x)-G_\ell^{(m)}(x)}\leq \int_{-\infty}^\infty M(\omega )d\omega \\
&=&\int_{I_1} M(\omega)d\omega +\int_{I_2}M(\omega)d\omega+\int_{I_3} M(\omega)d\omega \lesssim \frac{(\ln m)^{5/2}}{m^{3/2}}.
\end{eqnarray*}

\end{proof}

\begin{remark}
 It was proved in \cite{GGL} that,  for each fixed $\ell$, up to
  a normalization, a proper scaled
 $\psi_\ell^{(m)}$ uniformly converges to the $\ell$-order derivative
  of a  scaled Gaussian function with $m$ tending to infinity. Our result is in a different direction. In fact, we show that for sufficiently large  $m$ framelets $\psi_1^{(m)},\ldots,\psi^{(m)}_\ell,\ldots,\psi_m^{(m)}$ uniformly in $x$ and $\ell$  close to
 derivatives   of consecutive orders $1,\ldots,m$ of a scaled  Gaussian function whose scale depends on $m$.
\end{remark}

\section{Gaussian frame}

 Theorem \ref{th:strongconv} leads  us to consider  whether a wavelet system generated by a finite number of consecutive derivatives of a properly
 scaled Gaussian function forms a frame of $L_2(\R)$.
In this section, we show that the  frame property of $X(\psi_1^{(m)},\ldots,\psi_m^{(m)})$  can be transferred  to
that of $X(G_1^{(m)},\ldots,G_m^{(m)})$ where $G_\ell^{(m)}$ is defined as follows:  For each  fixed $m\in \N$,
 we consider the following rescaled Gaussian function
 $$
 G_{m,\ell}(x)=C_{m,\ell}\cdot \exp\left(- \frac{12 \cdot x^2}{2m-\ell} \right),
 $$
 where
 $$
C_{m,\ell}=\sqrt{\frac{6}{\pi}} \frac{\sqrt{m\choose \ell}}{\sqrt{m-\ell/2}\cdot 4^{\ell}};
$$
and
$$
G_\ell^{(m)}(x)=\Dev{x}{\ell}{G_{m,\ell}}(x-\frac{j_m}{2}), \qquad \ell=1,\ldots,m,
$$
where $j_m$ is given in \eqref{defjm}, and
$$
G^{(m)}=\{G_1^{(m)},\ldots,G_m^{(m)}\}.
$$

 Before stating the following main theorem of this section, we recall the definitions of the frame and { Bessel sequence}. A family $\{f_j\}_{j\in J}\subset L_2(\R)$ is called a {\em frame } with  bounds $A$ and $B$ if
$$
A\|f\|^2 \leq \sum_{j\in J}\abs{\innerp{f,f_j}}^2 \leq B\|f\|^2
$$
holds for all $f\in L_2(\R)$. If $A=B$, then $\{f_j\}_{j\in J}$ is called a {\em $A$-tight frame}. Moreover,
a family $\{f_j\}_{j\in J}\subset L_2(\R)$ is called a {\em Bessel sequence } with  a bound $R$ if
$$
 \sum_{j\in J}\abs{\innerp{f,f_j}}^2 \leq
 R\|f\|^2
$$
holds for all $f\in L_2(\R)$.

\begin{thm}\label{th:gaussframe} Let $X(G^{(m)})$ be the wavelet system generated by functions $G^{(m)}$. Then
$X(G^{(m)})$ is a frame system with frame bounds $A_m$ and $B_m$ for sufficiently large $m$. Furthermore, the frame is close to be tight as $m$ is sufficiently large. In fact, asymptotically, we have
$$
\lim_{m\rightarrow \infty}A_m=\lim_{m\rightarrow \infty}B_m=1.
$$
\end{thm}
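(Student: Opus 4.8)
The plan is to transfer the frame property from the tight framelet system $X(\Psi^{(m)})$, which has frame bound $1$ by the unitary extension principle of \cite{RonShen97}, to $X(G^{(m)})$ by a perturbation argument, the perturbation being controlled by Theorem~\ref{th:strongconv} and the estimates behind it. Set $\eta_\ell^{(m)}:=\psi_\ell^{(m)}-G_\ell^{(m)}$ and let $T_\psi,T_G,T_\eta\colon L_2(\R)\to\ell_2$ be the analysis operators of $X(\Psi^{(m)})$, $X(G^{(m)})$, and $X(\{\eta_\ell^{(m)}\}_{\ell=1}^m)$. Since $\innerp{f,\psi_{\ell,n,k}^{(m)}}-\innerp{f,G_{\ell,n,k}^{(m)}}=\innerp{f,\eta_{\ell,n,k}^{(m)}}$ we have $T_Gf=T_\psi f-T_\eta f$, while tightness with bound $1$ means $\|T_\psi f\|_{\ell_2}=\|f\|$ for all $f$. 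Hence, if $X(\{\eta_\ell^{(m)}\}_{\ell=1}^m)$ is a Bessel sequence with bound $\mu_m$, the triangle inequality in $\ell_2$ gives
$$(1-\sqrt{\mu_m})\|f\|\ \le\ \Big(\sum_{\ell,n,k}\bigabs{\innerp{f,G_{\ell,n,k}^{(m)}}}^2\Big)^{1/2}\ \le\ (1+\sqrt{\mu_m})\|f\|,$$
so that, once $\mu_m<1$, $X(G^{(m)})$ is a frame with $A_m=(1-\sqrt{\mu_m})^2$ and $B_m=(1+\sqrt{\mu_m})^2$; the theorem will follow as soon as $\mu_m\to0$. To bound $\mu_m$ we use the standard Plancherel/periodization estimate for wavelet Bessel sequences,
$$\mu_m\ \le\ \sup_{\omega}\sum_{\ell=1}^m\sum_{j\in\Z}\bigabs{\hat\eta_\ell^{(m)}(2^j\omega)}^2\ +\ \sup_{\omega}\sum_{\ell=1}^m\sum_{q\in\Z\setminus\{0\}}\sum_{j\in\Z}\bigabs{\hat\eta_\ell^{(m)}(2^j\omega)}\,\bigabs{\hat\eta_\ell^{(m)}(2^j\omega+2\pi q)},$$
together with two facts: (a) tightness of $X(\Psi^{(m)})$, which by the binomial theorem and $\sinc(\omega/4)\cos(\omega/4)=\sinc(\omega/2)$ reads $\sum_{\ell=1}^m\abs{\hat\psi_\ell^{(m)}(\omega)}^2=\sinc(\omega/4)^{2m}-\sinc(\omega/2)^{2m}\le1$ for $\omega\ne0$; and (b) $M(\omega):=\max_{1\le\ell\le m}\abs{\hat\psi_\ell^{(m)}(\omega)-\hat G_\ell^{(m)}(\omega)}$, which is exactly the function estimated in Lemma~\ref{le:minusless1m} and in the proof of Theorem~\ref{th:strongconv} ($M(\omega)\lesssim(\ln m)^2/m$ everywhere, $M(\omega)\lesssim1/m^4$ for $\abs{\omega}\ge20\sqrt{\ln m/m}$, and $M$ carries the Gaussian factor $\exp(-(m-\ell/2)\omega^2/24)$ inherited from $\hat\psi_\ell^{(m)}$ and $\hat G_\ell^{(m)}$, so it also decays rapidly as $\omega\to0$).

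For the diagonal sum, estimate $\abs{\hat\eta_\ell^{(m)}(\omega)}^2\le M(\omega)\big(\abs{\hat\psi_\ell^{(m)}(\omega)}+\abs{\hat G_\ell^{(m)}(\omega)}\big)$ and sum over $\ell$; by Cauchy--Schwarz, (a), and $mM(\omega)^2\lesssim(\ln m)^4/m\to0$ one gets $\sum_{\ell=1}^m\abs{\hat\eta_\ell^{(m)}(\omega)}^2\lesssim\sqrt m\,M(\omega)$. Hence the diagonal term is $\lesssim\sqrt m\,\sup_\omega\sum_{j\in\Z}M(2^j\omega)$, and the dyadic-orbit analogue of the $I_1,I_2,I_3$ splitting in the proof of Theorem~\ref{th:strongconv} yields $\sup_\omega\sum_jM(2^j\omega)\lesssim(\ln m)^2/m$: the Gaussian factor makes the scales with $\abs{2^j\omega}\le20\sqrt{\ln m/m}$ contribute $\lesssim m\cdot(\ln m/m)^2$, the scales with $20\sqrt{\ln m/m}\le\abs{2^j\omega}\le3\pi/2$ give $O(\ln m)$ terms each $\lesssim1/m^3$ (Lemma~\ref{le:expless}), and the scales with $\abs{2^j\omega}\ge3\pi/2$ give a geometric series, using the geometric decay of $\hat\psi_\ell^{(m)}$ established in the proof of Lemma~\ref{le:minusless1m}. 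Thus the diagonal term is $\lesssim(\ln m)^2/\sqrt m$.

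For the off-diagonal sum, the point is that for $q\ne0$ the two arguments $2^j\omega$ and $2^j\omega+2\pi q$ are never both near the origin: after dyadic rescaling one may let $t:=2^j\omega$ range over $\R$, and if $\abs{t}\le\pi$ then every $\abs{t+2\pi q}\ge\pi$, so the periodization $\sum_{q\ne0}\abs{\hat\eta_\ell^{(m)}(t+2\pi q)}\lesssim1/m^4$ (on arguments of modulus $\ge\pi\ge20\sqrt{\ln m/m}$ we have $\abs{\hat\psi_\ell^{(m)}}\lesssim1/m^4$, with geometric decay beyond modulus $3\pi/2$, while $\abs{\hat G_\ell^{(m)}}$ is Gaussian-small); whereas if $\abs{t}>\pi$ then already $\abs{\hat\eta_\ell^{(m)}(t)}\lesssim1/m^4$, with geometric decay of the dyadic tail. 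Summing the geometric tails in $j$ and $q$ and then over $\ell\le m$ (using $\sum_jM(2^j\omega)\lesssim(\ln m)^2/m$ once more for the small-modulus part), the off-diagonal term is $\lesssim(\ln m)^2/m^4$. Altogether $\mu_m\lesssim(\ln m)^2/\sqrt m$; in particular $\mu_m<1$ for large $m$, so $X(G^{(m)})$ is a frame with $A_m=(1-\sqrt{\mu_m})^2$ and $B_m=(1+\sqrt{\mu_m})^2$, whence $\lim_{m\to\infty}A_m=\lim_{m\to\infty}B_m=1$.

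The main obstacle is the diagonal sum: there are $m$ generators, so treating each $\eta_\ell^{(m)}$ separately loses a factor $m$ and the estimate does not close (the quantities $\sup_\omega\sum_j\abs{\hat\eta_\ell^{(m)}(2^j\omega)}^2$ are not summably small over $\ell$, the danger being $\ell$ of order $\Theta(\ln m)$). The remedy is to keep the sum over $\ell$ intact and play the \emph{exact} tightness of the B-spline framelet --- which pins $\sum_\ell\abs{\hat\psi_\ell^{(m)}}^2\le1$ and thus trades the factor $m$ for only $\sqrt m$ --- against the uniform-in-$\ell$ closeness bound $M(\omega)\lesssim(\ln m)^2/m$ of Section~3 and the Gaussian decay of $M$ away from a $\sqrt{\ln m/m}$-neighbourhood of the origin, so that the dyadic-orbit sum of $M$ is itself $O((\ln m)^2/m)$. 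The off-diagonal sum, by contrast, is harmless, reflecting the good frequency localization of the generators $\psi_\ell^{(m)}$ and of the Gaussians $G_\ell^{(m)}$.
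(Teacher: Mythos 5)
Your overall strategy coincides with the paper's: perturb the tight frame $X(\Psi^{(m)})$, reduce the problem to showing that the difference system generated by $\psi_\ell^{(m)}-G_\ell^{(m)}$ is Bessel with bound tending to $0$, and estimate that Bessel bound by the standard periodization sum using the Fourier-domain estimates of Section 3. Your triangle-inequality argument on the analysis operators is exactly the Favier--Zalik perturbation theorem (Theorem \ref{th:FZ} in the paper) specialized to $A=B=1$, and your split into diagonal ($q=0$) and off-diagonal parts mirrors the paper's $R_{m,1}+R_{m,2}$. The one genuine divergence is the diagonal sum, and there your diagnosis of the ``main obstacle'' is mistaken: treating each generator separately \emph{does} close. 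The Bessel bound is quadratic in the generators, so the uniform estimate $\abs{\hat{\phi}_\ell^{(m)}(\omega)}\lesssim (\ln m)^2/m$ applied over the $O(\log m)$ relevant scales $-\lfloor\log_2 m\rfloor<n<5$ gives $m\cdot\log m\cdot\bigl((\ln m)^2/m\bigr)^2=(\ln m)^5/m\to 0$; this is precisely what the paper does, and it is in fact asymptotically sharper than the $(\ln m)^2/\sqrt{m}$ you obtain by trading the square of $M$ for $\sqrt{m}\,M$ via Cauchy--Schwarz against the tightness identity. Your trick is valid, but it is neither necessary nor advantageous here.

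Two of your justifications need repair. First, the decay of $M(\omega)=\max_\ell\abs{\hat{\phi}_\ell^{(m)}(\omega)}$ as $\omega\to 0$, which you need for the convergence of the dyadic sum over $j\to-\infty$, does \emph{not} come from the factor $\exp(-(m-\ell/2)\omega^2/24)$ --- that factor tends to $1$ as $\omega\to 0$. It comes from the vanishing-moment prefactor $(\omega/4)^\ell$ together with the relative Taylor error $O(m\omega^4)$ between the sinc product and the Gaussian; this is exactly estimate (ii) of Lemma \ref{le:upbound}, which gives $\abs{\hat{\phi}_\ell^{(m)}(\omega)}\lesssim\sqrt{{m\choose \ell}}\,(\omega/4)^\ell\, m\,\omega^4$ for $\abs{\omega}\le m^{-1/2}$ and makes the scales below $m^{-1/2}$ contribute a convergent geometric series of total size $\lesssim 1/m$. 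With that substitution your small-scale bound stands. Second, in the off-diagonal part, the power bound $\sqrt{{m\choose \ell}}\,2^{m+\ell+1}/\abs{\omega}^m$ decays only for $\abs{\omega}$ at least of order $20$ (for $\abs{\omega}$ near $\pi$ it is exponentially \emph{large} in $m$), so the finitely many translates with $\pi\le\abs{t+2\pi q}\lesssim 20$ must be handled by the $1/m^4$ bound from Lemma \ref{le:expless} and \eqref{eq:sincconv}, with the power decay reserved for the tail in $q$; you do invoke both ingredients, and the paper's proof of Lemma \ref{le:rm0} formalizes this by separating $1\le\abs{k}\le k_0-1$ from $\abs{k}\ge k_0$. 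With these repairs your argument is correct and yields the stated conclusion.
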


To prove Theorem \ref{th:gaussframe}, we need the following theorem, which is proven in \cite{FZ}, together with  several lemmas.

\begin{thm}[\cite{FZ}]\label{th:FZ}
Let $\{f_j\}_{j\in J}$ be a frame of $L_2(\R)$ with bounds $A$ and $B$. Assume that
$\{g_j\}_{j\in J}\subset L_2(\R)$ is such that $\{f_j-g_j\}_{j\in J}$ is a Bessel sequence with a bound $R<A$.
Then $\{g_j\}_{j\in J}$ is a frame with bounds $A\left(1-\sqrt{\frac{R}{A}}\right)^2$ and $B\left(1+\sqrt{\frac{R}{B}}\right)^2$.
\end{thm}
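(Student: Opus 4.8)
The plan is to recast everything in terms of analysis operators on $\ell_2(J)$ and thereby reduce the statement to the triangle inequality. For a Bessel sequence $\{h_j\}_{j\in J}$, write its analysis operator as $T_h f:=\{\innerp{f,h_j}\}_{j\in J}$, so that $\|T_h f\|_{\ell_2(J)}^2=\sum_{j\in J}\abs{\innerp{f,h_j}}^2$. In this language the frame hypothesis on $\{f_j\}$ reads $\sqrt{A}\,\|f\|\leq \|T_f f\|_{\ell_2(J)}\leq \sqrt{B}\,\|f\|$ for all $f\in L_2(\R)$, while the Bessel hypothesis on $\{f_j-g_j\}$ reads $\|T_{f-g}f\|_{\ell_2(J)}\leq \sqrt{R}\,\|f\|$, where $T_{f-g}f=\{\innerp{f,f_j-g_j}\}_{j\in J}$.

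The key observation is the elementary identity $\innerp{f,g_j}=\innerp{f,f_j}-\innerp{f,f_j-g_j}$, valid for each $j$, which at the level of sequences in $\ell_2(J)$ becomes $T_g f=T_f f-T_{f-g}f$. Since both $T_f f$ and $T_{f-g}f$ lie in $\ell_2(J)$ by the two hypotheses, so does $T_g f$; in particular $\{g_j\}$ is automatically a Bessel sequence, and it remains only to estimate $\|T_g f\|_{\ell_2(J)}$ from above and below. First I would apply the ordinary triangle inequality in $\ell_2(J)$ to obtain $\|T_g f\|\leq \|T_f f\|+\|T_{f-g}f\|\leq (\sqrt{B}+\sqrt{R})\|f\|$, and then the reverse triangle inequality to obtain $\|T_g f\|\geq \|T_f f\|-\|T_{f-g}f\|\geq (\sqrt{A}-\sqrt{R})\|f\|$.

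Squaring these two estimates yields $(\sqrt{A}-\sqrt{R})^2\|f\|^2\leq \sum_{j\in J}\abs{\innerp{f,g_j}}^2\leq (\sqrt{B}+\sqrt{R})^2\|f\|^2$, and finally I would rewrite the constants by factoring, using $\sqrt{A}-\sqrt{R}=\sqrt{A}\,(1-\sqrt{R/A})$ and $\sqrt{B}+\sqrt{R}=\sqrt{B}\,(1+\sqrt{R/B})$, so that $(\sqrt{A}-\sqrt{R})^2=A(1-\sqrt{R/A})^2$ and $(\sqrt{B}+\sqrt{R})^2=B(1+\sqrt{R/B})^2$; these are exactly the asserted frame bounds. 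There is no genuine analytic obstacle here: the whole content is the decomposition $T_g=T_f-T_{f-g}$ combined with the triangle inequalities. The only point requiring a word of care is verifying that the lower bound is strictly positive, so that $\{g_j\}$ is a frame rather than merely a Bessel sequence, and this is precisely where the hypothesis $R<A$ enters, since it forces $\sqrt{A}-\sqrt{R}>0$.
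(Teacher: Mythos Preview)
Your argument is correct and complete: the decomposition $T_g=T_f-T_{f-g}$ of analysis operators together with the triangle and reverse triangle inequalities in $\ell_2(J)$ gives exactly the stated bounds, and the hypothesis $R<A$ is used precisely where you say, to ensure the lower bound is strictly positive.

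As for comparison with the paper: note that the paper does \emph{not} supply its own proof of this theorem. It is quoted as a known result from \cite{FZ} (Favier--Zalik), with the line ``the following theorem, which is proven in \cite{FZ}''. So there is no in-paper argument to compare against; you have filled in what the authors chose to cite. Your proof is in fact the standard one, and is essentially the argument given in the original reference.
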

Let
\begin{equation}\label{defphiml}
\phi_\ell^{(m)}:=\psi_\ell^{(m)}-{G}_\ell^{(m)},\, \ell=1,\ldots, m, \quad {\rm and }\quad \Phi^{(m)}:=\{\phi^{(m)}_1,\ldots,\phi^{(m)}_m\}.
\end{equation}
Since $X(\Psi^{(m)})$ is a tight frame with frame bound $1$,  to prove that $X(G^{(m)})$ is a  frame, according
to Theorem \ref{th:FZ}, we only need to show that $X(\Phi^{(m)})$ is
a Bessel sequence with a bound $R_m\rightarrow 0$.
 An estimate of the Bessel bound of a given a sequence  is provided in \cite{Ronshengramian}  that enables us to estimate the Bessel  bound of $X(\Phi^{(m)})$ (see also \cite{Daubechies}). Let
 \begin{equation}\label{eq:rm0}
R_m:=\sup_{1\leq \abs{\omega}\leq 2}\sum_{k\in \Z}\sum_{n\in \Z}\sum_{\ell=1}^m\, \abs{\hat{\phi}_\ell^{(m)}(2^n\omega)} \cdot \abs{\hat{\phi}_\ell^{(m)}(2^n\omega+2k\pi)}.
\end{equation}
 Then,  for arbitrary $f\in L^2(\R)$, the following inequality holds
$$
\sum_{\phi\in X(\Phi^{(m)})}\abs{\left<f,\phi\right>}^2\leq  R_m \|f\|_2^2,
$$
i.e. $R_m$ is the Bessel  up bound of the system $X(\Phi^{(m)})$.
Next, we estimate $R_m$. For this, we need the following lemmas.
\begin{lem}\label{le:upbound} Let $\hat{\phi}_\ell^{(m)}$ be the Fourier transform of $\phi_\ell^{(m)}$ defined in \eqref{defphiml}.
Then the following three estimates for $\abs{\hat{\phi}_\ell^{(m)}}$ holds:
\begin{enumerate}
\item[(i)]
$$
\abs{\hat{\phi}_\ell^{(m)}(\omega)}\,\, \leq\,\, \sqrt{m\choose \ell}\cdot \frac{2^{m+\ell+1}}{\abs{\omega}^m},\qquad \abs{\omega} \geq 20.
$$
\item[(ii)]
$$
\abs{\hat{\phi}_\ell^{(m)}(\omega)}\,\, \lesssim\,\,  \sqrt{m\choose \ell} \left(\frac{\omega}{4}\right)^\ell\cdot m\cdot \omega^4,\qquad \abs{\omega}\leq \sqrt{\frac{1}{m}}.
$$
\item[(iii)]
$$
\max_{1\leq \ell\leq m} \max_{\omega\in \R}\,\,\abs{\hat{\phi}_\ell^{(m)}(\omega)}\,\, \lesssim \,\, \frac{\ln^2m}{m}.
$$
\end{enumerate}
\end{lem}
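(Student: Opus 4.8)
The plan is to push everything onto the single identity
$$\bigl|\hat\phi_\ell^{(m)}(\omega)\bigr|=\sqrt{m\choose \ell}\,\frac{|\omega|^\ell}{4^\ell}\,\Bigl|\sinc\Bigl(\frac{\omega}{2}\Bigr)^{m-\ell}\sinc\Bigl(\frac{\omega}{4}\Bigr)^{2\ell}-\exp\Bigl(-\frac{(m-\ell/2)\,\omega^2}{24}\Bigr)\Bigr|,$$
after which (iii) is nothing but Lemma~\ref{le:minusless1m}. To obtain it I would first rewrite \eqref{eq:pside}, exactly as in the proof of Theorem~\ref{th:framederiv}, as
$$\hat\psi_\ell^{(m)}(\omega)=i^\ell e^{-ij_m\omega/2}\sqrt{m\choose \ell}\Bigl(\frac{\omega}{4}\Bigr)^\ell\sinc\Bigl(\frac{\omega}{2}\Bigr)^{m-\ell}\sinc\Bigl(\frac{\omega}{4}\Bigr)^{2\ell},$$
and then compute $\hat G_\ell^{(m)}$ from $\widehat{e^{-ax^2}}(\omega)=\sqrt{\pi/a}\,e^{-\omega^2/(4a)}$ with $a=12/(2m-\ell)$, together with $\widehat{f^{(\ell)}}=(i\omega)^\ell\hat f$ and the translation rule. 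The key point is that $C_{m,\ell}$ is precisely the constant for which
$$\hat G_\ell^{(m)}(\omega)=i^\ell e^{-ij_m\omega/2}\sqrt{m\choose \ell}\Bigl(\frac{\omega}{4}\Bigr)^\ell\exp\Bigl(-\frac{(m-\ell/2)\,\omega^2}{24}\Bigr),$$
so that subtracting the two displays gives the identity above, and (iii) follows immediately.

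For (i) I would use $|\hat\phi_\ell^{(m)}|\le|\hat\psi_\ell^{(m)}|+|\hat G_\ell^{(m)}|$. The bounds $|\sinc(\omega/2)|\le 2/|\omega|$ and $|\sinc(\omega/4)|\le 4/|\omega|$ give $|\hat\psi_\ell^{(m)}(\omega)|\le\sqrt{m\choose \ell}\,2^{m+\ell}/|\omega|^m$ for every $\omega\neq0$. For the Gaussian term I want $\tfrac{|\omega|^\ell}{4^\ell}e^{-(m-\ell/2)\omega^2/24}\le 2^{m+\ell}/|\omega|^m$, i.e.\ $|\omega|^{m+\ell}e^{-(m-\ell/2)\omega^2/24}\le 2^{m+3\ell}$; using $m+\ell\le 2m$, $m-\ell/2\ge m/2$ and $|\omega|\ge 20$, the left side is at most $(\omega^2 e^{-\omega^2/48})^m\le(400\,e^{-25/3})^m<1$, so this holds. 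Adding the two estimates yields (i) with the stated constant $2^{m+\ell+1}$.

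For (ii) I would work on $|\omega|\le 1/\sqrt m$ (so in particular $|\omega|\le 3\pi/2$) and reuse the Taylor factorisation from the proof of Lemma~\ref{le:minusless1m},
$$\sinc\Bigl(\frac{\omega}{2}\Bigr)^{m-\ell}\sinc\Bigl(\frac{\omega}{4}\Bigr)^{2\ell}=\exp\Bigl(-\frac{(m-\ell/2)\,\omega^2}{24}\Bigr)e^{-S(\omega)},\qquad S(\omega)=(m-\ell)\,r\Bigl(\frac{\omega}{2}\Bigr)+2\ell\,r\Bigl(\frac{\omega}{4}\Bigr),$$
where $r(x):=-\ln\sinc(x)-x^2/6$ has a power series $x^4/180+O(x^6)$ with nonnegative coefficients, so $|r(x)|\lesssim x^4$ on $|x|\le 3\pi/4$ and hence $|S(\omega)|\lesssim m\omega^4$. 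Since $|\omega|\le 1/\sqrt m$ forces $|S(\omega)|\lesssim 1/m\le 1$,
$$\Bigl|\sinc\Bigl(\frac{\omega}{2}\Bigr)^{m-\ell}\sinc\Bigl(\frac{\omega}{4}\Bigr)^{2\ell}-e^{-(m-\ell/2)\omega^2/24}\Bigr|=e^{-(m-\ell/2)\omega^2/24}\bigl|e^{-S(\omega)}-1\bigr|\lesssim m\omega^4,$$
and multiplying by $\sqrt{m\choose \ell}(\omega/4)^\ell$ gives (ii).

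The only genuinely load-bearing step is the algebraic identification of $\hat\phi_\ell^{(m)}$ above, i.e.\ checking that $C_{m,\ell}$ is exactly the constant making the Fourier transform of $G_\ell^{(m)}$ coincide with the leading Gaussian factor of $\hat\psi_\ell^{(m)}$, prefactor included; once that is in place, (iii) is a citation of Lemma~\ref{le:minusless1m} and (i), (ii) are the elementary tail and Taylor estimates sketched above. I do not expect a real obstacle, only some care with the numerical thresholds $20$ and $1/\sqrt m$, which are chosen precisely so that the last inequality in each case closes.
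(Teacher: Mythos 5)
Your proposal is correct and follows essentially the same route as the paper: the same reduction to the identity $\abs{\hat\phi_\ell^{(m)}(\omega)}=\sqrt{\tbinom{m}{\ell}}\frac{\abs{\omega}^\ell}{4^\ell}\bigabs{\sinc(\tfrac{\omega}{2})^{m-\ell}\sinc(\tfrac{\omega}{4})^{2\ell}-e^{-(m-\ell/2)\omega^2/24}}$ (the paper calls this ``a simple calculation''; your explicit check that $C_{m,\ell}$ makes the prefactors match is the right verification), the same triangle-inequality plus $\abs{\sinc(x)}\le 1/\abs{x}$ tail bound for (i), the same Taylor factorisation $e^{-S(\omega)}$ with $\abs{S(\omega)}\lesssim m\omega^4$ for (ii), and the same citation of Lemma~\ref{le:minusless1m} for (iii).
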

\begin{proof} First a simple calculation leads to
\begin{eqnarray*}
\abs{\hat{\phi}_\ell^{(m)}(\omega)}&=&\abs{\hat{\psi}_\ell^{(m)}(\omega)-\hat{G}_\ell^{(m)}(\omega)}\\
&=&\Bigabs{\sqrt{m\choose \ell} \frac{\omega^\ell}{4^\ell} \left(\sinc\left(\frac{\omega}{2}\right)^{m-\ell}\sinc\left(\frac{\omega}{4}\right)^{2\ell}-\exp\left(-(m-\frac{\ell}{2}) \frac{\omega^2}{24}\right)\right)}.
\end{eqnarray*}
 For (i),  when $\abs{\omega}\geq 20$, a simple argument shows that
\begin{eqnarray*}
\abs{\frac{\omega^\ell}{4^\ell}}\exp\left(-(m-\frac{\ell}{2})\frac{\omega^2}{24}\right) \leq  \frac{2^{m+\ell}}{\abs{\omega}^m},\quad
  \bigabs{ (\frac{\omega}{4})^\ell\cdot \sinc(\frac{\omega}{2})^{m-\ell}\cdot \sinc(\frac{\omega}{4})^{2\ell}} \leq \frac{2^{m+\ell}}{\abs{\omega}^m},
\end{eqnarray*}
which implies that
$$
\abs{\hat{\phi}_\ell^{(m)}(\omega)}\,\, \leq\,\, \sqrt{m\choose \ell}\cdot \frac{2^{m+\ell+1}}{\abs{\omega}^m}.
$$
 For (ii), the Taylor expansion shows that, when $\abs{\omega}\leq \pi$,
$$
\ln \left(\sinc(\frac{\omega}{2})^{m-\ell}\sinc(\frac{\omega}{4})^{2\ell}\right)=-\left((m-\frac{\ell}{2})\frac{\omega^2}{24}+(m-\frac{7}{8}\ell)\frac{\omega^4}{2880}
+O(\omega^6)\right)
$$
Then, when $\abs{\omega}\leq \sqrt{\frac{1}{m}}$,
\begin{eqnarray*}
& &\abs{\sinc(\frac{\omega}{2})^{m-\ell}\sinc(\frac{\omega}{4})^{2\ell}-\exp\left(-(m-\frac{\ell}{2})\frac{\omega^2}{24}\right)}\\
&=&\exp\left(-(m-\frac{\ell}{2})\frac{\omega^2}{24}\right)\Bigabs{\exp\left(-(m-\frac{7}{8}\ell)\frac{\omega^4}{2880}-O(\omega^6)\right)-1} \\
&\lesssim& m\cdot \omega^4,
\end{eqnarray*}
which implies that
$$
\abs{\hat{\phi}_\ell^{(m)}(\omega)}\,\, \lesssim\,\,  \sqrt{m\choose \ell}\cdot \left(\frac{\omega}{4}\right)^\ell\cdot m\cdot \omega^4.
$$

Finally,  the conclusion of (iii)  can be obtained by Lemma \ref{le:minusless1m} directly.

\end{proof}
\begin{lem}\label{le:rm0}
Let  $R_m$ be given by \eqref{eq:rm0}. Then
$$
R_m\lesssim \frac{\ln^5m}{m}; \quad\hbox{and}\quad
\lim_{m\rightarrow \infty}R_m=0.
$$
\end{lem}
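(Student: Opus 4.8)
The plan is to bound $R_m$ by splitting the triple sum in \eqref{eq:rm0} according to the dyadic scale $n$ and the shift $k$, applying the three pointwise estimates of Lemma \ref{le:upbound} in the three frequency regimes to which they are adapted. Fix $\omega$ with $1\leq\abs\omega\leq 2$, write $\xi_n:=2^n\omega$, and partition $\Z=N_-\cup N_0\cup N_+$, where $N_-:=\{n:\abs{\xi_n}\leq m^{-1/2}\}$ (the regime of (ii)), $N_+:=\{n:\abs{\xi_n}\geq 20\}$ (the regime of (i)), and $N_0$ is the set of remaining ``middle'' scales. Since $\abs\omega\in[1,2]$, the set $N_0$ consists of $O(\ln m)$ consecutive integers; on $N_0$, and as a crude uniform fallback everywhere, one uses the bound (iii). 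Two elementary facts are used repeatedly: $\abs{\xi_n}+\abs{\xi_n+2k\pi}\geq 2\pi\abs k$, so the larger of the two arguments is $\geq\pi\abs k$; and for each fixed $n$ there are only $O(1)$ integers $k$ with $\abs{\xi_n+2k\pi}<20$, while $\sum_{k:\,\abs{\xi_n+2k\pi}\geq 20}\abs{\xi_n+2k\pi}^{-m}\lesssim 20^{-m}$ by comparison with a geometric series.

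First I would treat the diagonal $k=0$, i.e.\ $\sum_n\sum_\ell\abs{\hat\phi_\ell^{(m)}(\xi_n)}^2$. For $n\in N_0$, bounding each of the at most $m$ terms by $(C\ln^2 m/m)^2$ via (iii) gives $\lesssim \ln^4 m/m$ per scale, hence $\lesssim \ln^5 m/m$ after summing over the $O(\ln m)$ scales of $N_0$; this is the term that produces the asserted rate. For $n\in N_-$, estimate (ii) together with $\sum_\ell {m\choose\ell}^{1/2}(\abs{\xi_n}/4)^\ell\leq\sum_{\ell\geq 0}(4^\ell\sqrt{\ell!})^{-1}=O(1)$ yields $\sum_\ell\abs{\hat\phi_\ell^{(m)}(\xi_n)}^2\lesssim m^2\abs{\xi_n}^8$, which is summable over $n\in N_-$ with total $\lesssim m^2(m^{-1/2})^8=m^{-2}$. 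For $n\in N_+$, estimate (i) with $\sum_\ell{m\choose\ell}4^\ell=5^m$ gives $\sum_\ell\abs{\hat\phi_\ell^{(m)}(\xi_n)}^2\lesssim 4\cdot 20^m\abs{\xi_n}^{-2m}\leq 4(20/4^{n})^m$, which is geometrically small and summable over $n\in N_+$ since $4^n\gg 20$ there. Thus the $k=0$ contribution to $R_m$ is $\lesssim\ln^5 m/m$.

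The main work is the off-diagonal part $k\neq 0$, which I would again organize by where $n$ lies. If $n\in N_-$: bound $\abs{\hat\phi_\ell^{(m)}(\xi_n)}$ by (ii); for the $O(1)$ shifts with $\abs{\xi_n+2k\pi}<20$ bound the other factor by $C\ln^2 m/m$ via (iii), and for the rest by ${m\choose\ell}^{1/2}2^{m+\ell+1}\abs{\xi_n+2k\pi}^{-m}$ via (i); summing over $\ell$ using $\sum_\ell{m\choose\ell}(\abs{\xi_n}/2)^\ell=(1+\abs{\xi_n}/2)^m\leq(3/2)^m$, then over $k$, then over $n\in N_-$ (with $\sum_{n\in N_-}\abs{\xi_n}^4\lesssim m^{-2}$) gives a contribution $\lesssim m^{-1}(3/20)^m+\ln^2 m\cdot m^{-2}$. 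If $n\in N_0$: bound $\abs{\hat\phi_\ell^{(m)}(\xi_n)}$ by (iii); the $O(1)$ near shifts then give $\lesssim m\cdot(\ln^2 m/m)^2=\ln^4 m/m$ per scale (bounding the second factor by (iii) as well), hence $\lesssim\ln^5 m/m$ over $N_0$, while the far shifts give, via (i) and the Cauchy--Schwarz bound $\sum_\ell{m\choose\ell}^{1/2}2^{m+\ell}\lesssim 2^{5m/2}$ against $\abs{\xi_n+2k\pi}^{-m}\leq 20^{-m}$, at most $\lesssim (2^{5/2}/20)^m$ per scale, which is geometrically small. If $n\in N_+$: bound $\abs{\hat\phi_\ell^{(m)}(\xi_n)}$ by (i); the near shifts are handled by (iii) for the second factor and the far shifts by (i), and in either case the decay $\abs{\xi_n}^{-m}\leq 20^{-m}$ of the first factor, against $\sum_\ell{m\choose\ell}^{1/2}2^{m+\ell}\lesssim 2^{5m/2}$, gives $\lesssim (2^{5/2}/20)^m$ per scale, geometrically small and summable over $n\in N_+$. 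Adding all cases, the $k\neq 0$ part of $R_m$ is $\lesssim\ln^5 m/m$.

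Combining the $k=0$ and $k\neq 0$ estimates gives $R_m\lesssim\ln^5 m/m$ uniformly for $1\leq\abs\omega\leq 2$, and hence $R_m\to 0$ as $m\to\infty$, which is the claim (and, via Theorem \ref{th:FZ}, the input needed for Theorem \ref{th:gaussframe}). The main obstacle is the bookkeeping in the off-diagonal sum: one must use the genuine polynomial decay of (ii) at small frequencies --- the uniform bound (iii) alone is not summable over $n\to-\infty$ --- together with the $\abs\omega^{-m}$ decay of (i) at large frequencies and large shifts, which is what overcomes the exponentially large factors of size $2^{O(m)}$ coming from the sums over $\ell$ and makes the sums over $n\to+\infty$ and $\abs k\to\infty$ converge; once this is arranged, the only surviving terms of the critical size $\ln^5 m/m$ are those on the $O(\ln m)$ ``middle'' scales of $N_0$, where (iii) is the only available estimate.
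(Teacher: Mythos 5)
Your proposal is correct and follows essentially the same route as the paper: split $R_m$ into the diagonal ($k=0$) and off-diagonal parts, partition the dyadic scales into the three regimes matched to estimates (i)--(iii) of Lemma \ref{le:upbound}, extract the dominant $\ln^5 m/m$ contribution from the $O(\ln m)$ middle scales via (iii), and control everything else by the geometric decay in $n$ and $k$ supplied by (i) and (ii). The only difference is bookkeeping --- you organize the off-diagonal sum by the scale $n$ with a near/far dichotomy in $k$, whereas the paper organizes it by $k$, bounding $\beta(2k\pi)\lesssim 3^m/(2k\pi)^{m/2}$ for $\abs{k}\geq 10$ --- and your treatment of the small nonzero shifts is, if anything, slightly more careful than the paper's.
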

\begin{proof}  Let
\begin{eqnarray*}
R_{m,1}&:=&\sup_{1\leq \abs{\omega}\leq 2} \sum_{n\in \Z}\sum_{\ell=1}^m\abs{\hat{\phi}_\ell^{(m)}(2^n\omega)}^2,\\
R_{m,2}&:=&\sup_{1\leq \abs{\omega}\leq 2}\sum_{k\in \Z\setminus\{0\}}\sum_{n\in \Z} \sum_{\ell=1}^m \left(\abs{\hat{\phi}_\ell^{(m)}(2^n\omega)}\cdot \abs{\hat{\phi}_\ell^{(m)}(2^n\omega+2k\pi)}\right).
\end{eqnarray*}
Then, we  have that
\begin{equation}\label{eq:rm}
R_m \leq R_{m,1}+R_{m,2}.
\end{equation}

To estimate $R_m$, we  consider $R_{m,1}$ and $R_{m,2}$, respectively. We first estimate $R_{m,1}$.  For this, we rewrite
\begin{eqnarray*}
R_{m,1}&=&\sup_{1\leq \abs{\omega}\leq 2}\sum_{n\in \Z} \sum_{\ell=1}^m\, \abs{\hat{\phi}_\ell^{(m)}(2^n\omega)}^2
= \sup_{1\leq \abs{\omega}\leq 2} [S_1(\omega)+S_2(\omega)+S_3(\omega)],
\end{eqnarray*}
where
\begin{eqnarray*}
S_1(\omega)&:=& \sum_{n\geq 5} \sum_{\ell=1}^m\,\, \abs{\hat{\phi}_\ell^{(m)}(2^n\omega)}^2,\qquad
S_2(\omega):=\sum_{ -\lfloor\log_2m\rfloor<n<5} \sum_{\ell=1}^m\,\, \abs{\hat{\phi}_\ell^{(m)}(2^n\omega)}^2,\\
S_3(\omega)&:=&\sum_{n\leq -\lfloor\log_2m\rfloor} \sum_{\ell=1}^m\,\, \abs{\hat{\phi}_\ell^{(m)}(2^n\omega)}^2.
\end{eqnarray*}
By  (i) in Lemma \ref{le:upbound},  we obtain that, for $1\leq \abs{\omega}\leq 2$,
\begin{eqnarray*}
S_1(\omega)&=& \sum_{n\geq 5}\sum_{\ell=1}^m \abs{\hat{\phi}_\ell^{(m)}(2^n\omega)}^2\\
& \leq & 4  \sum_{n\geq 5} \sum_{\ell=1}^m  {m\choose \ell} \frac{4^{m+\ell}}{(2^n\omega)^{2m}} \\
& = & 4 \sum_{n\geq 5} \frac{4^m}{(2^n\omega)^{2m}}\sum_{\ell=1}^m  {m\choose \ell} 4^\ell \\
&\lesssim & \left(\frac{5}{256}\right)^m.
\end{eqnarray*}
Using (ii) in Lemma \ref{le:upbound}, when $1\leq \abs{\omega} \leq 2$,
\begin{eqnarray*}
S_3(\omega)&=& \sum_{n\leq -\lfloor\log_2m\rfloor}\sum_{\ell=1}^m\abs{\hat{\phi}_\ell^{(m)}(2^n\omega)}^2= \sum_{n \geq
\lfloor\log_2m\rfloor}\sum_{\ell=1}^m \abs{\hat{\phi}_\ell^{(m)}(\frac{\omega}{2^n})}^2\\
&\lesssim & \sum_{n\geq \lfloor\log_2m\rfloor} \sum_{\ell=1}^m  {m\choose \ell} \left(\frac{\omega}{4\cdot 2^n}\right)^{2\ell} \cdot m^2 \cdot \left(\frac{\omega^4}{2^{4n}}\right)^2 \\
&=& \sum_{n\geq \lfloor\log_2m\rfloor}m^2 \cdot \left(\frac{\omega^4}{2^{4n}}\right)^2  \sum_{\ell=1}^m {m\choose \ell} \left(\frac{\omega}{4\cdot 2^n}\right)^{2\ell} \\
&\leq & m^2 \sum_{n\geq \lfloor\log_2m\rfloor } \left(\frac{\omega^4}{2^{4n}}\right)^2\left(1+\left(\frac{\omega}{4\cdot 2^n}\right)^2\right)^m \\
&\lesssim &  m^2 \sum_{n\geq \lfloor\log_2m\rfloor} \left(\frac{1}{2^{4n}}\right)^2\left(1+\left(\frac{1}{ 2^{n+1}}\right)^2\right)^m\\
&\lesssim & \frac{1}{m^6}.
\end{eqnarray*}
Here, the last inequality uses the fact of  $\{\left(1+\left(\frac{1}{ 2^{n+1}}\right)^2\right)^m\}_{n\geq \lfloor\log_2m\rfloor,\, m\in \Z_+}$ being a bounded sequence and
\begin{eqnarray*}
\sum_{n \geq \lfloor\log_2m\rfloor} \left(\frac{1}{2^{4n}}\right)^2 =\sum_{n\geq \lfloor\log_2m\rfloor} \frac{1}{256^n} = \frac{256}{255} \cdot \frac{1}{256^{\lfloor\log_2m\rfloor}} \lesssim \frac{1}{m^8}.
\end{eqnarray*}
Moreover, by (iii) in Lemma \ref{le:upbound},  we have
$$
S_3(\omega)=\sum_{\ell=1}^m  \sum_{-\lfloor\log_2m\rfloor \leq n\leq 5}\abs{\hat{\phi}_\ell(2^n\omega)}^2 \lesssim  \frac{\ln^5m}{m}.
$$
Combining the results above, we obtain that
\begin{equation}\label{eq:rm1}
R_{m,1}=\sup_{1\leq \abs{\omega}\leq 2} [S_1(\omega)+S_2(\omega)+S_3(\omega)] \lesssim  \frac{\ln^5m}{m}.
\end{equation}
We next turn to
$$
R_{m,2}=\sup_{1\leq \abs{\omega}\leq 2}\sum_{k\in \Z\setminus\{0\}} \sum_{\ell=1}^m\sum_{n\in \Z} \left(\abs{\hat{\phi}_\ell^{(m)}(2^n\omega)}\cdot \abs{\hat{\phi}_\ell^{(m)}(2^n\omega+2k\pi)}\right).
$$
To state conveniently, we set
$$
\beta(2k\pi):=\sup_{1\leq \abs{\omega}\leq 2}\sum_{\ell=1}^m  \sum_{n\in \Z}\, \abs{\hat{\phi}_\ell^{(m)}(2^n\omega)}\cdot \abs{\hat{\phi}_\ell^{(m)}(2^n\omega+2k\pi)}.
$$
Then
$$
R_{m,2}\leq \sum_{k\in \Z\setminus\{0\}} \beta(2k\pi).
$$
Set $k_0:=10$. When $1\leq \abs{k}\leq k_0-1$, using the argument similar to the one in the estimation of $R_{m,1}=\beta(0)$, we can show that
$$
\beta(2k\pi) \lesssim \frac{1}{m}\qquad \mbox{ for } 1\leq \abs{k}\leq k_0-1.
$$
We claim that, when  $\abs{k}\geq k_0$,
\begin{equation}\label{eq:claim}
\beta(2k\pi)\lesssim \frac{3^m}{{\abs{2k\pi}}^{m/2}}.
\end{equation}
And hence,
\begin{equation}\label{eq:rm2}
R_{m,2}=\left(\sum_{1\leq \abs{k}\leq k_0-1}+\sum_{\abs{k}\geq k_0}\right)\beta(2k\pi)\lesssim \frac{1}{m}+\sum_{\abs{k}\geq k_0}\frac{3^m}{{\abs{2k\pi}}^{m/2}}\lesssim \frac{1}{m}.
\end{equation}
Combing \eqref{eq:rm1} and \eqref{eq:rm2}, we obtain that
$$
R_m=R_{m,1}+R_{m,2}\lesssim \frac{\ln^5m}{m},
$$
 which implies the conclusion.

Finally,  we prove  \eqref{eq:claim}. A simple observation is that $\beta(2k\pi)=\beta(-2k\pi)$. Hence, we only
need consider the case where $k\geq k_0$.  For the  convenience, let
\begin{eqnarray*}
\beta_+(2k\pi)&:=&\sup_{1\leq \abs{\omega}\leq 2}\sum_{\ell=1}^m  \sum_{n\in \Z_+}  \left(\abs{\hat{\phi}_\ell^{(m)}(2^n\omega)}\cdot \abs{\hat{\phi}_\ell^{(m)}(2^n\omega+2k\pi)}\right),\\
\beta_-(2k\pi)&:=&\sup_{1\leq \abs{\omega}\leq 2} \sum_{\ell=1}^m  \sum_{n<0} \left(\abs{\hat{\phi}_\ell^{(m)}(2^n\omega)}\cdot \abs{\hat{\phi}_\ell^{(m)}(2^n\omega+2k\pi)}\right).
\end{eqnarray*}
Then $\beta(2k\pi)\leq \beta_+(2k\pi)+\beta_-(2k\pi)$.
To estimate $\beta_+(2k\pi)$, we furthermore set
\begin{eqnarray*}
\beta_+^+(2k\pi)&:=& \sup_{1\leq {\omega} \leq 2} \sum_{\ell=1}^m \sum_{n\in \Z_+}\abs{\hat{\phi}_\ell^{(m)}(2^n\omega)}\cdot \abs{\hat{\phi}_\ell^{(m)}(2^n\omega+2k\pi)}, \\
\beta_+^-(2k\pi)&:=&\sup_{-2\leq {\omega} \leq -1} \sum_{\ell=1}^m \sum_{n\in \Z_+}\abs{\hat{\phi}_\ell^{(m)}(2^n\omega)}\cdot \abs{\hat{\phi}_\ell^{(m)}(2^n\omega+2k\pi)}.
\end{eqnarray*}
Then $\beta_+(2k\pi)=\max\{\beta_+^+(2k\pi),\beta_+^-(2k\pi)\}$.
Note that  $k\geq k_0=10$,  by Lemma \ref{le:upbound},
\begin{eqnarray*}
\beta_+^+(2k\pi)&=&\sup_{1\leq \omega\leq 2} \sum_{\ell=1}^m \sum_{n\in \Z_+}\abs{\hat{\phi}_\ell^{(m)}(2^n\omega)}\cdot \abs{\hat{\phi}_\ell^{(m)}(2^n\omega+2k\pi)}\\
&\lesssim& \sum_{\ell=1}^m \sum_{n\in \Z_+} {m\choose \ell} \frac{2^{m+\ell}}{(2^n+2k\pi)^m} \\
&=& \sum_{n\in\Z_+}\sum_{\ell=1}^m {m\choose \ell} \frac{2^{m+\ell}}{(2^n+2k\pi)^m} \\
&\leq & \sum_{n\in \Z_+}\frac{6^m}{(2^n+2k\pi)^m} \leq \sum_{n\in \Z_+} \frac{6^m}{2^m\cdot 2^{nm/2}\cdot (2k\pi)^{m/2}}\\
&\lesssim & \frac{3^m}{({2k\pi})^{m/2}}.
\end{eqnarray*}
We next consider
\begin{eqnarray*}
\beta_+^-(2k\pi)&=& \sup_{-2\leq \omega\leq -1} \sum_{\ell=1}^m \sum_{n\in \Z_+}\abs{\hat{\phi}_\ell^{(m)}(2^n\omega)}\cdot \abs{\hat{\phi}_\ell^{(m)}(2^n\omega+2k\pi)}\\
&=&\sup_{1\leq \omega\leq 2} \sum_{\ell=1}^m  \sum_{n\in \Z_+}\abs{\hat{\phi}_\ell^{(m)}(2^n\omega)}\cdot \abs{\hat{\phi}_\ell^{(m)}(2^n\omega-2k\pi)}.
\end{eqnarray*}
 A simple observation is that $\max\{\abs{2^n\omega},\abs{2^n\omega-2k\pi}\}\geq k\pi \geq k_0\pi$.  Then using (i) in Lemma
\ref{le:upbound}, we obtain that
$$
\abs{\hat{\phi}_\ell^{(m)}(2^n\omega)}\cdot \abs{\hat{\phi}_\ell^{(m)}(2^n\omega-2k\pi)}\lesssim {m\choose \ell}\cdot \frac{2^{m+\ell}}{(k\pi)^m}.
$$
Set $n_0:=\lceil\log_2(2k\pi)\rceil+5$. Then,
\begin{eqnarray*}
& &\sup_{1\leq \omega \leq 2} \sum_{\ell=1}^m \sum_{0\leq n\leq n_0}\abs{\hat{\phi}_\ell^{(m)}(2^n\omega)}\cdot \abs{\hat{\phi}_\ell^{(m)}(2^n\omega-2k\pi)}\\
&\lesssim & \sum_{0\leq n\leq n_0} \sum_{\ell=1}^m {m\choose \ell} \frac{2^{m+\ell}}{(k\pi)^m} =  \sum_{0\leq n\leq n_0}
  \frac{2^{m}}{(k\pi)^m} \sum_{\ell=1}^m {m\choose \ell}\cdot 2^\ell \\
  &\leq &\sum_{0\leq n\leq n_0} \frac{6^m}{(k\pi)^m} \lesssim \log_2(2k\pi)\cdot \frac{6^m}{(k\pi)^m}.
\end{eqnarray*}
We next consider
$$
\sup_{1\leq \omega \leq 2}\sum_{\ell=1}^m \sum_{n_0+1\leq n}\abs{\hat{\phi}_\ell^{(m)}(2^n\omega)}\cdot \abs{\hat{\phi}_\ell^{(m)}(2^n\omega-2k\pi)}.
$$
By (i) in Lemma \ref{le:upbound}, when $n\geq n_0+1$,
$$
\sup_{1\leq \omega \leq 2}\abs{\hat{\phi}_\ell^{(m)}(2^n\omega)}\leq \sqrt{m\choose \ell} \frac{2^{m+\ell}}{(2k\pi)^m}, \quad \sup_{1\leq \omega \leq 2}\abs{\hat{\phi}_\ell^{(m)}(2^n\omega-2k\pi)}\leq \sqrt{m\choose \ell}\frac{2^{m+\ell}}{(2^n-2k\pi)^m}.
$$
Hence,
\begin{eqnarray*}
& & \sup_{1\leq \omega \leq 2}\sum_{\ell=1}^m\sum_{n\geq n_0+1}\abs{\hat{\phi}_\ell^{(m)}(2^n\omega)}\cdot \abs{\hat{\phi}_\ell^{(m)}(2^n\omega-2k\pi)} \\
&\leq & \sum_{\ell=1}^m \sum_{n\geq n_0+1} {m\choose \ell} \frac{2^{m+\ell}}{(2k\pi)^m}\cdot \frac{2^{m+\ell}}{(2^n-2k\pi)^m}\\
& \leq & \frac{4^m\cdot 5^m}{(2k\pi)^m} \sum_{n\geq n_0+1} \frac{1}{(2^n-2k\pi)^m}\lesssim \left(\frac{10}{k\pi}\right)^m,\qquad k\geq k_0.
\end{eqnarray*}
Therefore,
$$
\beta_+^-(2k\pi)\lesssim \log_2(2k\pi)\cdot \frac{6^m}{(k\pi)^m}+\left(\frac{10}{k\pi}\right)^m.
$$
This concludes that
$$
\beta_+(2k\pi)=\max\{\beta_+^+(2k\pi),\beta_+^-(2k\pi)\} \lesssim \frac{3^m}{({2k\pi})^{m/2}}.
$$
Using (ii) in Lemma \ref{le:upbound} and a similar analysis with above, we can obtain that
\begin{eqnarray*}
\beta_-(2k\pi)&=&  \sup_{1\leq \abs{\omega}\leq 2} \sum_{\ell=1}^m \sum_{n<0}\,\abs{\hat{\phi}_\ell^{(m)}(2^n\omega)}\cdot \abs{\hat{\phi}_\ell^{(m)}(2^n\omega+2k\pi)}\\
&\lesssim & m\cdot \frac{4^m}{(2k\pi)^m}.
\end{eqnarray*}
Putting everything together, we have that
$$
\beta(2k\pi)=\beta_+(2k\pi)+\beta_-(2k\pi) \lesssim \frac{6^m}{(2\sqrt{2k\pi})^m}+ m\cdot \frac{4^m}{(2k\pi)^m}\lesssim \frac{3^m}{({2k\pi})^{m/2}}.
$$
This proves \eqref{eq:claim}.

\end{proof}

\begin{proof}[{\bf Proof of Theorem \ref{th:gaussframe}}]
Recall that
$$
\Phi^{(m)}=\{\phi^{(m)}_1,\ldots,\phi^{(m)}_m\},\,\, \phi_\ell^{(m)}=\psi_\ell^{(m)}-{G}_\ell^{(m)}, \quad \ell=1,\ldots,m;
$$
 and that $X(\Psi^{(m)})$ is a tight frame with frame bound 1, where $\Psi^{(m)}=\{\psi_1^{(m)},\ldots,\psi_m^{(m)}\}$.
 Lemma \ref{le:rm0} shows that  $X(\Phi^{(m)})$ is a Bessel sequence with a bound $R_m\rightarrow 0$.
  Then Theorem \ref{th:FZ} leads that $X(G^{(m)})$ is a frame with
frame bound $A_m=(1-\sqrt{R_m})^2$ and $B_m=(1+\sqrt{R_m})^2$ as $m$ sufficiently large. Furthermore, it can be close to a tight frame, since $\lim_{m\rightarrow \infty}A_m=\lim_{m\rightarrow \infty}B_m=1$, which completes the proof.
\end{proof}

\begin{remark}
Although Theorem \ref{th:gaussframe} confirms the case where $m$ is sufficiently large, the result of Theorem \ref{th:gaussframe} seems to hold for small $m$ ($m$ can be as small as $2$). For small $m$, combining \eqref{eq:rm0} and Theorem \ref{th:FZ}, we can estimate the frame bounds of $X(G^{(m)})$ numerically. We list the frame bound estimation of $X(G^{(m)})$, $2\leq m\leq 8$,
in  Table 1, which clearly shows the frame property of $X(G^{(m)})$ for small $m$. For example,
for $m=2$, $X(G^{(2)})$ is a frame with frame bounds $A \approx  0.3855$ and    $B\approx 1.9020$.
 Figure 2 shows that the graphs of the functions of $G_1^{(2)}$ and $G_2^{(2)}$, respectively.

\begin{table}
\caption{ The numerical results of frame bounds of $X(G^{(m)})$}
\begin{tabular}{c c c c c c c c}
  \hline
  $m$ & 2 & 3 & 4 & 5 & 6 & 7 & 8  \\ \hline
  $A$ & 0.3855 & 0.5266 &  0.5898 &  0.6407  & 0.6803 &  0.7095  &  0.7274  \\
  $B$ & 1.9020 & 1.6239 & 1.5179 & 1.4390 & 1.3811 & 1.3403 & 1.3159  \\
  \hline
\end{tabular}
\end{table}

\begin{figure}[!ht]
\begin{center}
\epsfxsize=5.5cm\epsfbox{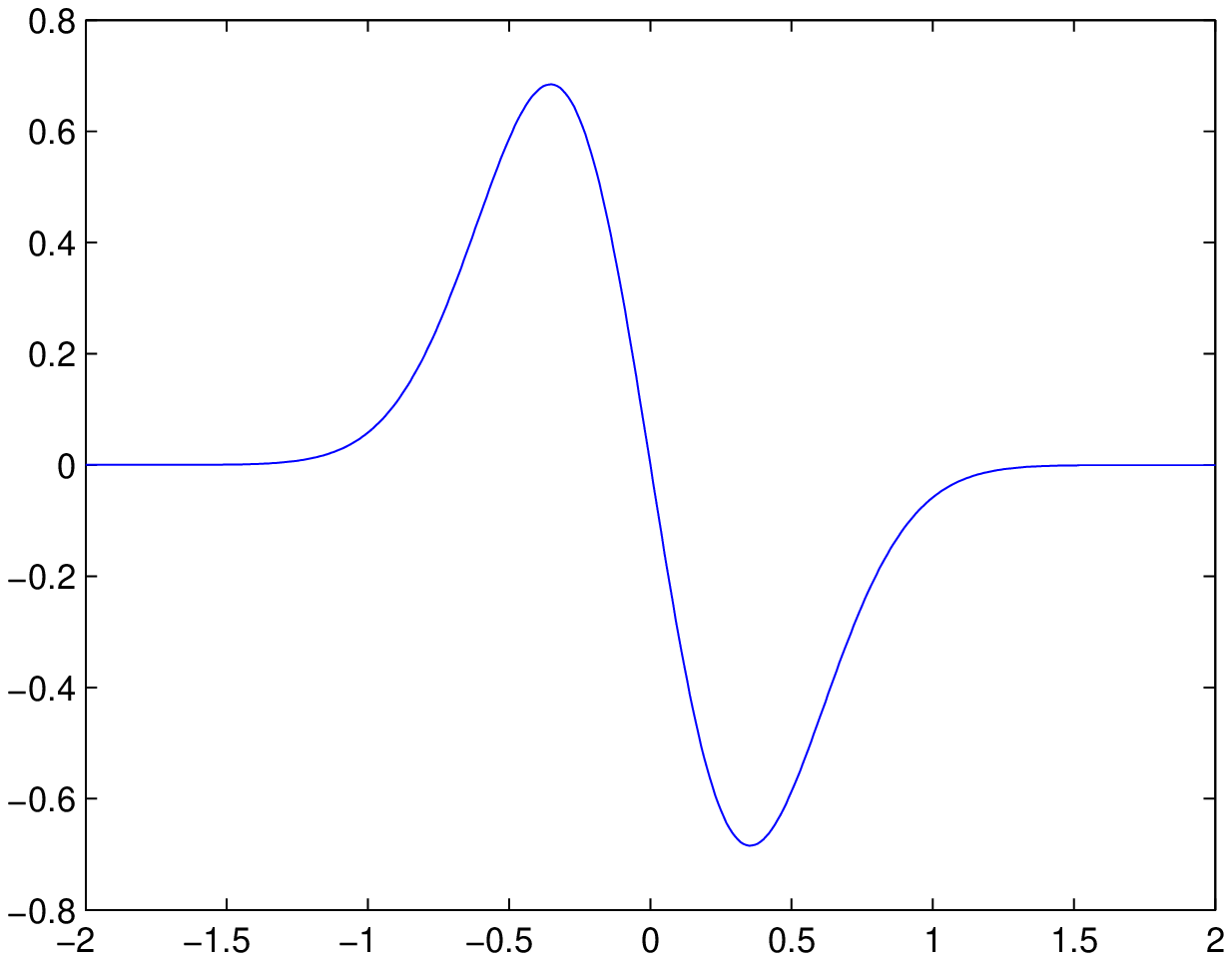}\epsfxsize=5.5cm\epsfbox{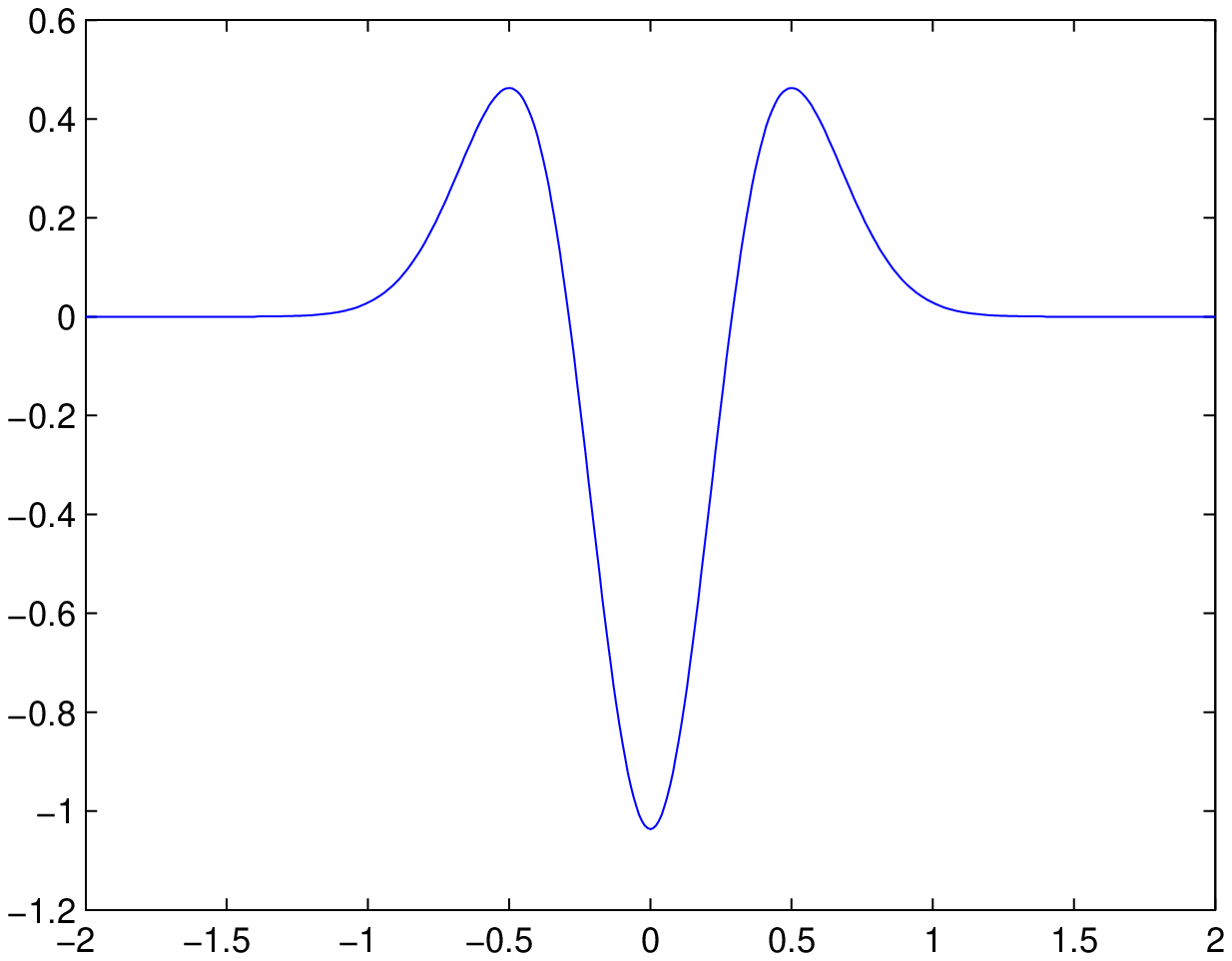}
\end{center}
\caption{ The graphs of $G^{(2)}_1(x)=-\sqrt{{32}/{\pi}} x \exp(-4x^2)$ (left) and
$G^{(2)}_2(x)=\sqrt{{27}/{(8\pi)}}(12x^2-1)\exp(-6x^2)$ (right).}
\end{figure}

\end{remark}

\bigskip

\end{document}